\def\M{\mathcal M}
\newcommand{\dif}{\,\text{d}}
\renewcommand{\phi}{\varphi}
\newcommand{\jump}[1]{\lbrack\!\lbrack#1\rbrack\!\rbrack} 
\newcommand*\bigcdot{\mathpalette\bigcdot@{.5}}
\newcommand*\bigcdot@[2]{\mathbin{\vcenter{\hbox{\scalebox{#2}{$\m@th#1\bullet$}}}}}
\newcommand{\tightoverset}[2]{%
  \mathop{#2}\limits^{\vbox to -.45ex{\kern-0.75ex\hbox{$#1$}\vss}}}
\DeclareRobustCommand\widecheck[1]{{\mathpalette\@widecheck{#1}}}
\def\@widecheck#1#2{%
    \setbox\z@\hbox{\m@th$#1#2$}%
    \setbox\tw@\hbox{\m@th$#1%
       \widehat{%
          \vrule\@width\z@\@height\ht\z@
          \vrule\@height\z@\@width\wd\z@}$}%
    \dp\tw@-\ht\z@
    \@tempdima\ht\z@ \advance\@tempdima2\ht\tw@ \divide\@tempdima\thr@@
    \setbox\tw@\hbox{%
       \raise\@tempdima\hbox{\scalebox{1}[-1]{\lower\@tempdima\box
\tw@}}}%
    {\ooalign{\box\tw@ \cr \box\z@}}}
\newtheorem{Remark}[equation]{Remark}
\newenvironment{remark}{\begin{Remark}\rm}{\end{Remark}}
\newtheorem{theorem}[equation]{Theorem}
\newtheorem{corollary}[equation]{Corollary}
\newtheorem{lemma}[equation]{Lemma}
\numberwithin{equation}{section}
\title{Pointwise a posteriori error bounds for blow-up in the semilinear heat equation}
\author{Irene Kyza}
\address{Department of Mathematics, University of Dundee, Dundee, DD1 4HN, United Kingdom}
\email{ikyza@maths.dundee.ac.uk}
\author{Stephen Metcalfe}
\address{Department of Chemistry, University of York, York, YO10 5DD, United Kingdom}
\email{smetcalfephd@gmail.com}
\thanks{S. Metcalfe acknowledges the support of the Swiss National Science Foundation (SNF) grant no. 200021-162990}
\begin{document}

\begin{abstract}
This work is concerned with the development of a space-time adaptive numerical method, based on a rigorous \emph{a posteriori} error bound, for the semilinear heat equation with a general local Lipschitz reaction term whose solution may blow-up in finite time. More specifically, conditional \emph{a posteriori} error bounds are derived in the $L^{\infty}L^{\infty}$ norm for a first order in time, implicit-explicit (IMEX), conforming finite element method in space discretization of the problem. Numerical experiments applied to both blow-up and non blow-up cases highlight the generality of our approach and complement the theoretical results.
\end{abstract}

\keywords{Semilinear heat equation, IMEX methods, conditional \emph{a posteriori} error estimates, blow-up singularities}

\subjclass[2010]{65J08, 65L05, 65L60}

\maketitle

\section{Introduction}

Let $\displaystyle \Omega \subset \mathbb{R}^d$ with $d=2$ or $d=3$ be a bounded polyhedral domain and consider the problem
\begin{equation}\label{model_strong}
\begin{aligned}
u_t - a\Delta{u}-f(u) &= 0 \qquad && \text{in }  \Omega, \mbox{ } t > 0  \mbox{,} \\ u &=0 \mbox{ } && \text{on }  \partial\Omega, \mbox{ } t > 0 \mbox{,} \\ u(\cdot,0) &=u_0 \mbox{ } && \text{in } \bar{\Omega}\mbox{,}
\end{aligned}
\end{equation}
where $a$ is a positive constant and the initial condition $u_0 \in C^2(\Omega)\cap C(\bar{\Omega})$ takes boundary values that are compatible with those of the PDE. Note that here the reaction term $f$ can be both space and time dependent but as the nature of the dependence is usually clear, we omit writing it explicitly for brevity. It is well known that for certain data the solution to \eqref{model_strong} exhibits \emph{finite time blow-up} \cite{Hu}, that is, there exists a maximal time of existence $T_{\infty} < \infty$ referred to as the \emph{blow-up time} such that \eqref{model_strong} holds and 
\[
\|u(t)\|_{L^{\infty}(\Omega)}<\infty\text{ for }0<t<T_\infty,\qquad \qquad \lim_{t\nearrow T_\infty}\|u(t)\|_{L^{\infty}(\Omega)} = \infty.
\]
If the solution to \eqref{model_strong} does not exhibit finite-time blow-up then the solution is global and so $T_{\infty} = \infty$. Either way, we assume that \eqref{model_strong} holds on some closed interval $[0,T]$ and that $T < T_{\infty}$. We will see in the sequel that we can show that \eqref{model_strong} has a unique local solution $u \in C(0,T;L^{\infty}(\Omega))$ provided that an implicit \emph{local a posteriori criteron} is satisfied and that this local critereon is robust with respect to the distance from the blow-up time.

The numerical approximation of blow-up phenomena in partial differential equations (PDEs) is a challenging problem due to the high spatial and temporal resolution needed close to the blow-up time. Classical numerical methods that give good approximations to the solution of \eqref{model_strong} close to the blow-up time include the rescaling algorithm of Berger and Kohn \cite{BK88,NZ16} and the MMPDE method \cite{BHR96,HMR08}. Recently, there has been a lot of interest in deriving \emph{a posteriori} error bounds for such problems and using the resulting estimators to drive an adaptive procedure in order to get close to the blow-up time. Indeed, it is easy to see why such an approach can confer significant advantages; for example, if it is known that the $L^{\infty}L^{\infty}$ norm of the error is always bounded from above by a finite quantity then it is impossible to surpass the blow-up time! 

\emph{A posteriori} error estimators for linear problems tend to be \emph{unconditional}, that is, they always hold independent of the problem data and the size of the discretization parameters. For nonlinear problems, the situation is more complicated since the existence of a solution to an appropriate error equation (and, thus, of an error bound) usually requires that either the data or the discretization parameters are sufficiently small. As a result, \emph{a posteriori} error estimators for nonlinear problems tend to be \emph{conditional}, that is, they only hold provided that an \emph{a posteriori} verifiable condition (which can be either explicit or implicit) is satisfied. For nonlinear time-dependent problems, there are two commonly used approaches for deriving conditional \emph{a posteriori} error bounds: continuation arguments, cf. \cite{B05, M15,KMW16,CGKM16, KNS04,GM14,CGMS17}, and fixed point arguments, cf. \cite{KM11, CM08, K09, MTKO17a, MTKO17b, MTKO17c}.

The derivation of such estimates for \eqref{model_strong} and related problems in the context of blow-up was first explored in \cite{K09,KM11} for polynomial nonlinearities but these early pointwise bounds are not well suited for the practical computation of blow-up problems by virtue of being \emph{global} rather than \emph{local} in nature. The situation was improved in \cite{M15,CGKM16} by the derivation of error bounds using energy techniques combined with the Gagliardo-Nirenberg inequality that are valid under local rather than global conditions. While the bounds of \cite{M15,CGKM16} represent a significant improvement from a practical perspective, the derived error bounds still have significant drawbacks; for example, the range of nonlinearities that can be considered is smaller than in \cite{K09,KM11} due to Sobolev embedding restrictions and convergence towards the blow-up time is still \emph{slow} when compared with results on the numerical approximation of blow-up in ODEs \cite{KMW16,M15,CGKM16}. It should be remarked, though, that the use of energy techniques in \cite{M15,CGKM16} does confer advantages in other areas; specifically, it allows for the derivation of error bounds for problems with non-symmetric spatial operators for which pointwise error bounds are unlikely to be achievable anytime soon.

In this paper, we seek to derive conditional \emph{a posteriori} error bounds in the $L^\infty L^\infty$ norm for the first order in time, implicit-explicit (IMEX), conforming finite element method in space discretization of \eqref{model_strong}. It is worth noting that the choice of an IMEX method not only confers advantages in terms of ease of solubility but has also been shown to have advantages in the context of estimation of the blow-up time, cf. \cite{M15,CGKM16}. The results that we will present here are an improvement over existing results in the literature in two major ways. Firstly, we significantly broaden the range of nonlinearities under consideration; specifically, the (possibly) nonlinear reaction term $f:\bar{\Omega} \times [0,T_\infty)\times \mathbb{R}\to \mathbb{R}$ is assumed to be continuous and to satisfy the \emph{local} Lipschitz estimate
\begin{equation}\label{eq:Lip}
|f(x,t,v)-f(x,t,w)| \leq  \mathcal{L}(t,|v|,|w|)|v-w| \qquad \forall x \in \bar{\Omega} \quad \forall t \in [0,T_\infty) \quad \forall v,w \in \mathbb{R}.
\end{equation}
Here, $\mathcal{L}:[0,T_\infty)\times \mathbb{R}^+_0 \times \mathbb{R}^{+}_0 \to  \mathbb{R}^{+}_0$ is a \emph{known} function that satisfies $\mathcal{L}(\cdot,a,b) \in L^1(0,T_\infty)$ for any $a,b \in \mathbb{R}^+_0$ and that is continuous and monotone increasing in the second and third arguments. This condition on $f$ is quite general and includes many nonlinearities of interest, for example, it covers any polynomial nonlinearity with suitably regular coefficients as well as nonlinearities of exponential type \cite{KMW16}. We stress that this local Lipschitz assumption is in contrast to assumptions made for currently existing pointwise \emph{a posteriori} bounds of \eqref{model_strong}, cf. \cite{KL16,KL17}, wherein the focus is on the singularly preturbed case and, thus, $f$ is assumed to be globally Lipschitz. Secondly, we follow the approach taken in \cite{KL16,K09,KM11,KL17} of conducting the error analysis via semigroup techniques {\bf --} this allows us to consider the error in an ODE setting. In combination with a \emph{local-in-time} fixed point argument along the lines of \cite{KMW16,CGKM16,M15}, this restores the optimality that is otherwise lost in an energy setting \cite{CGKM16}. Finally, we show numerically that our conditional \emph{a posteriori} error bound is well-behaved with respect to the distance from the blow-up time; specifically, we show that the rate of convergence to the blow-up time is comparable to the rate observed in \cite{KMW16}.

\subsection*{Outline} We begin in Section 2 by outlining the IMEX discretization of \eqref{model_strong} then in Section 3 we introduce several auxiliary results which will be used in the error analysis. In Section 4, we derive the conditional \emph{a posteriori} error bound and in Section 5 we propose a general adaptive algorithm, applicable to both blow-up and fxed-time problems, that is based upon the derived \emph{a posteriori} error bound. We apply this adaptive algorithm to several numerical examples in Section 6 in order to illustrate that the proposed \emph{a posteriori} bound is well-behaved close to the blow-up time. Finally, we draw conclusions and outline our plans for future research in Section 7.

\subsection*{Notation} 

As they will be used frequently in this work, we denote the $L^2$ inner product on $\Omega$ by $(\cdot,\cdot)$ and the $L^{\infty}$ norm on $\Omega$ by $||\cdot||$.

\section{Discretization} 

Consider a shape-regular mesh $\mathcal{T}=\{K\}$ of $\Omega$ with $K$ denoting a generic element of diameter $h_K$ that is constructed via affine mappings $F_{K}:\widehat{K}\to K$ with non-singular Jacobian where $\widehat{K}$ is the $d$-dimensional reference simplex or the $d$-dimensional reference cube. The mesh is allowed to contain a uniformly fixed number of regular hanging nodes per face. With these definitions, the finite element space $\mathcal{V}_h(\mathcal{T})$ over the mesh $\mathcal{T}$ is given by
\begin{equation}\label{eq:FEspace}
\mathcal{V}_h(\mathcal{T}) := \{v \in  H^1_0(\Omega) : v|_K\circ F_K \in \mathcal{P}^p(\widehat{K}), \, K \in \mathcal{T} \},
\end{equation}
where $\mathcal{P}^p(\widehat{K})$ denotes the space of polynomials of total degree $p$ if $\widehat{K}$ is the $d$-dimensional reference simplex or of degree $p$ in each variable if $\widehat{K}$ is the $d$-dimensional  reference cube. Given two meshes $\mathcal{T}_1$ and $\mathcal{T}_2$, we denote their \emph{coarsest common refinement} by $\mathcal{T}_1 \vee \mathcal{T}_2$ and their \emph{finest common coarsening} by $\mathcal{T}_1 \wedge \mathcal{T}_2$. We also define the jump residual $\jump{\nabla v_h}$ of a function $v_h \in \mathcal{V}_h(\mathcal{T})$ at a point $x$ on the $(d-1)$-dimensional inter-element face $E = \bar{K} \cap \bar{K}'$, $K, K' \in \mathcal{T}$ by 
\begin{equation*}
\jump{\nabla v_h}(x) := \lim_{\delta \rightarrow 0} [\nabla v_h (x + \delta n) - \nabla v_h (x - \delta n)] \bigcdot n,
\end{equation*}
where $n$ is an arbitrary normal vector on $E$.

We consider a first order in time, implicit-explicit (IMEX), space-time discretization of \eqref{model_strong} consisting of implicit treatment for the diffusion term and explicit treatment for the nonlinear reaction term. For the spatial discretization, we use the standard conforming finite element method. To this end, we introduce a sequence of time nodes $0 := t_0 < t_1 < \cdots < t_{M-1} < t_M := T$ which define a time partition $\M:=\{I_m\}_{m=1}^M$ of~$(0,T)$ into~$M$ open time intervals~$I_m:=(t_{m-1},t_m)$, $m=1,\ldots,M$. The length $k_m := t_m - t_{m-1}$ (which may be variable) of the time interval $I_m$ is called the time step length. Furthermore, if we let $\mathcal{T}_0$ denote an initial spatial mesh of $\Omega$ associated with the first time node $t_0=0$ then to each additional time node $t_m$, $m=1,\dots,M$, we associate a spatial mesh $\mathcal{T}_m$ which is assumed to have been obtained from $\mathcal{T}_{m-1}$ by local refinement and/or coarsening. We remark that this restriction upon mesh change is made in order to avoid degradation of the finite element solution, cf. \cite{BKM13,D82}. To each mesh $\mathcal{T}_{m}$ we then assign the finite element space $\mathcal{V}_h^m := \mathcal{V}_h(\mathcal{T}_m)$ given by~\eqref{eq:FEspace}. 

With this notation at hand, the IMEX method then reads as follows. Following the approach taken in \cite{BKM13}, we choose $U^0 \in \mathcal{V}_h^0$ to be the unique solution of the problem
\begin{equation}
\begin{aligned}
\label{IC}
(\nabla U^{0}, \nabla V^0) = (-\Delta u_0,V^0)  \qquad \forall V^0 \in \mathcal{V}_h^0.
\end{aligned}
\end{equation}
We then seek $U^m \in \mathcal{V}_h^m$, $m = 1, \ldots, M$, such that 
\begin{equation}
\begin{aligned}
\label{IMEX}
\bigg (\frac{U^{m}-U^{m-1}}{k_m},V^m \bigg)+a(\nabla U^{m}, \nabla V^m) - (f^{m-1},V^m) = 0 \qquad \forall V^m \in \mathcal{V}_h^m,
\end{aligned}
\end{equation}
where we set $f^{m-1}:=f(\cdot,t_{m-1},U^{m-1})$ for brevity.  For $t \in \bar{I}_m$, $m = 1, \ldots, M$, $U(t)$ is then defined to be the linear interpolant with respect to $t$ of the values $U^{m-1}$ and $U^{m}$, viz., 
\begin{equation}
\begin{aligned}
U(t) := \ell_{m-1}(t) U^{m-1} + \ell_{m}(t) U^m,
\end{aligned}
\end{equation}
where $\{\ell_{m-1}, \ell_{m}\}$ denotes the standard linear Lagrange interpolation basis on the interval $\bar{I}_m$.

\section{Preliminaries}

Before we proceed with the error analysis, we require some auxiliary results and some additional notation. Our first result is a maximum principle for a related parabolic equation.

\begin{theorem}
\label{maxprinciple}
Let $\mathrm{e}^{t \Delta}$ be the solution operator for the problem
\begin{equation}
\begin{aligned}
\notag
w_t - a\Delta{w} &= 0 \qquad && \text{in }  \Omega, \mbox{ }t>0  \mbox{,} \\ w &=0 \mbox{ } && \text{on }  \partial\Omega, \mbox{ }t>0 \mbox{,} \\ w(\cdot,0) &=w_0 \mbox{ } && \text{in } \bar{\Omega}\mbox{,}
\end{aligned}
\end{equation}
with $w_0 \in L^{\infty}(\Omega)$. In other words, $w(t) = \mathrm{e}^{t \Delta}w_0$. Then for any $t>0$, the following bound holds
\begin{equation}
\begin{aligned}
\notag
||\mathrm{e}^{t \Delta}w_0|| \leq ||w_0||.
\end{aligned}
\end{equation}
\end{theorem}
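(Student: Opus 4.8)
My plan is to recognise the statement as the classical $L^{\infty}$-contractivity of the Dirichlet heat semigroup and to obtain it from the parabolic maximum principle. The only genuine subtleties are that $w_0$ is merely in $L^{\infty}(\Omega)$ rather than continuous up to $\bar\Omega$, and that $\Omega$ is only polyhedral, so the textbook pointwise maximum principle does not apply verbatim. I would therefore combine a density argument with the \emph{weak} (variational) maximum principle, which remains valid on Lipschitz, hence polyhedral, domains.

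First I would regularise the data. Writing $M := ||w_0||$ and letting $T_M(s) := \max\{-M,\min\{M,s\}\}$ be the truncation at height $M$, I would pick $\phi_n \in C_c^{\infty}(\Omega)$ with $\phi_n \to w_0$ in $L^2(\Omega)$ (possible since $\Omega$ is bounded, so $w_0 \in L^2(\Omega)$) and set $w_0^n := T_M(\phi_n)$. Since $T_M$ is $1$-Lipschitz and leaves $w_0$ unchanged, one has $w_0^n \to w_0$ in $L^2(\Omega)$, while by construction $w_0^n \in H_0^1(\Omega) \cap L^{\infty}(\Omega)$ and $||w_0^n|| \le M$. For the regularised datum $w_0^n$ the function $w^n(t) := \mathrm{e}^{t\Delta} w_0^n$ is a variational solution of the heat problem that is additionally smooth in $\Omega$ for $t>0$; comparing it with the constant super/sub-solutions $\pm M$ — equivalently, testing the equations satisfied by $(w^n-M)^+$ and $(-w^n-M)^+$ in the spirit of Stampacchia's truncation method, using that these truncations lie in $H_0^1(\Omega)$ and vanish at $t=0$ — yields $||w^n(t)|| \le ||w_0^n|| \le M$ for every $t>0$.

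It then remains to pass to the limit: since $\mathrm{e}^{t\Delta}$ is bounded on $L^2(\Omega)$ we have $w^n(t) \to \mathrm{e}^{t\Delta}w_0$ in $L^2(\Omega)$, and extracting a subsequence converging almost everywhere together with the uniform bound just obtained gives $|\mathrm{e}^{t\Delta}w_0(x)| \le ||w_0||$ for a.e.\ $x \in \Omega$, i.e.\ $||\mathrm{e}^{t\Delta}w_0|| \le ||w_0||$. The step I expect to be the main obstacle is the maximum principle for the regularised problem on a non-smooth domain; the clean way around it is precisely the variational weak maximum principle rather than a pointwise classical one. As an alternative that sidesteps the regularisation entirely, I could instead use the non-negativity of the Dirichlet heat kernel $G(\cdot,\cdot,t) \ge 0$ together with $\int_\Omega G(x,y,t)\dif y = (\mathrm{e}^{t\Delta}\mathbf{1})(x) \le 1$ (the constant $1$ being a supersolution whose boundary trace dominates that of $\mathrm{e}^{t\Delta}\mathbf{1}$), from which $|w(x,t)| \le ||w_0||\int_\Omega G(x,y,t)\dif y \le ||w_0||$ follows at once.
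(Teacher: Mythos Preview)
Your argument is correct: the regularisation by truncated smooth approximants, the Stampacchia-type testing with $(w^n-M)^+$ and $(-w^n-M)^+$, and the $L^2$ limit passage all go through on a bounded Lipschitz (in particular polyhedral) domain, and the heat-kernel alternative you sketch is equally valid. There is nothing to compare, however, because the paper does not prove this statement at all; it simply refers the reader to a textbook (page~93 of \cite{T84}). So you have supplied considerably more than the authors did, and your discussion of the only genuine subtlety --- that $w_0$ need not be continuous and $\Omega$ need not be smooth, forcing one to use the variational rather than the classical maximum principle --- is a point the paper leaves entirely to the cited reference.
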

\begin{proof}
See page 93 in \cite{T84}.
\end{proof}

We next introduce an error bound for a related elliptic problem which will be crucial in the error analysis of the parabolic problem.

\begin{theorem}
\label{ellipticerrorbound}
Let $w \in H^1_0(\Omega)\cap C(\bar{\Omega})$ be the unique solution to the elliptic problem
\begin{equation}
\begin{aligned}
\notag
-a\Delta{w} &= g \qquad && \text{in }  \Omega \mbox{,} \\ w &=0 \mbox{ } && \text{on }  \partial\Omega \mbox{,}
\end{aligned}
\end{equation}
where $g \in C({\Omega})$ and let $w_h \in \mathcal{V}_h$ be its conforming finite element approximation. Then the following pointwise a posteriori bound holds
\begin{equation}
\begin{aligned}
\notag
||w-w_h|| \leq C_{\infty}\log(1 \slash \underbar{h}) \max_{K \in \mathcal{T}} \left[ h_K^2 a^{-1}||g+a \Delta w_h||_{L^{\infty}(K)} + h_K||\jump{\nabla w_h}||_{L^{\infty}(\partial K \backslash \partial \Omega)} \right],
\end{aligned}
\end{equation}
where $\displaystyle \underbar{h} := \min_{K \in \mathcal{T}} h_K$ is the minimum mesh-size and $C_{\infty}$ is a positive constant that is independent of the maximum mesh-size, $a$, $w$ and $w_h$ but may be dependent upon the size of the domain $\Omega$.
\end{theorem}
\begin{proof}
See \cite{DK14}.
\end{proof}
The error bound of Theorem \ref{ellipticerrorbound} and, thus, the spatial error estimators in the forthcoming parabolic error analysis are well-behaved in the elliptic regime but badly-behaved in the singularly preturbed regime ($a \approx 0$), cf. \cite{DK14}. As we are primarily interested in blow-up in the elliptic regime, we elect to use the simpler error bound of Theorem \ref{ellipticerrorbound}  but if one is interested in the singularly preturbed regime, the spatial estimators should be replaced with the \emph{full} estimator from \cite{DK14}.

\section{Error Analysis}

In the forthcoming error analysis, on each time step $m$, we will work with the Bochner space $C(\bar{I}_m;L^{\infty}(\Omega))$ {\bf --} the space of all  continuous functions $v:\bar{I}_m \to L^{\infty}(\Omega)$ equipped with the norm
\begin{equation}
\begin{aligned}
\notag
||v||_m := \sup_{t \in \bar{I}_m} ||v(t)||.
\end{aligned}
\end{equation}
In what follows, we will need to make use of the elliptic reconstruction technique \cite{LM06,MN03}. To that end, we define the \emph{elliptic reconstruction} $\omega^m \in H^1_0(\Omega)$, $m = 0,1, \ldots, M$, to be the solution of 
\begin{equation}
\begin{aligned}
\label{recon}
a(\nabla \omega^m, \nabla v)=(A^m,v) \qquad \forall v \in H^1_0(\Omega),
\end{aligned}
\end{equation}
where $A^m$ is the \emph{discrete laplacian} given by 
\begin{equation*}
A^m  := \begin{cases}\displaystyle -a \Delta u_0 \qquad & \text{if } m = 0
 \\ \displaystyle f^{m-1} -  U_t  \big|_{I_m} \qquad & \text{if } m \neq 0 \end{cases}.
\end{equation*}
As with the numerical solution $U(t)$, $\omega(t)$ is defined to be the linear interpolant with respect to $t$ of the values $\omega^{m-1}$ and $\omega^{m}$, viz., 
\begin{equation}
\begin{aligned}
\label{omegadef}
\omega(t) := \ell_{m-1}(t) \omega^{m-1} + \ell_{m}(t) \omega^m,
\end{aligned}
\end{equation}
for $t \in \bar{I}_m$, $m = 1, \ldots, M$. For the error analysis, the error $e := u - U$ will be decomposed $e = \rho + \epsilon$ where $\rho := u - \omega$ is the \emph{parabolic error} and $\epsilon = \omega - U$ is the \emph{elliptic error}. Note that the conforming finite element discretization of \eqref{recon} is either \eqref{IC} or \eqref{IMEX} thus $||\epsilon(t_m)||$ can be estimated through elliptic error estimators available in the literature. To that end, we have the following lemma.
\begin{lemma}
\label{spaceest}
The estimate
\begin{equation}
\begin{aligned}
\notag
||\epsilon(t_m)|| \leq C_{\infty}\log(1 \slash \underbar{h}_m)\! \max_{K \in \mathcal{T}_m} \eta_S^m \big|_K,
\end{aligned}
\end{equation}
holds with $\displaystyle \underbar{h}_m := \min_{K \in \mathcal{T}_m} h_K$ denoting the minimum mesh-size, $C_{\infty}$ the constant of Theorem \ref{ellipticerrorbound} and where $\eta_S^m$ is the primary space estimator given by
\begin{equation}
\begin{aligned}
\notag
\eta_S^m \big|_K:= h_K^2 a^{-1} ||A^m + a\Delta U^m||_{L^{\infty}(K)}+ h_K||\jump{\nabla U^m}||_{L^{\infty}(\partial K \backslash \partial \Omega)}, \qquad K \in \mathcal{T}_m.
\end{aligned}
\end{equation}
\end{lemma}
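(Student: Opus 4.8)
The plan is to apply Theorem \ref{ellipticerrorbound} at each time node $t_m$, taking the elliptic reconstruction $\omega^m$ as the exact elliptic solution, the discrete laplacian $A^m$ as the right-hand side datum, and the numerical solution $U^m$ as the conforming finite element approximation. Once this identification is in place, the asserted bound is exactly the conclusion of Theorem \ref{ellipticerrorbound}: indeed, by \eqref{omegadef} and the definition of $U(t)$ one has $\epsilon(t_m) = \omega(t_m) - U(t_m) = \omega^m - U^m$, and the elliptic estimator of Theorem \ref{ellipticerrorbound} evaluated with $g = A^m$ and $w_h = U^m$ coincides termwise with $\eta_S^m\big|_K$, once one recalls that $\Delta U^m$ is to be understood element-wise.

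First I would check the hypotheses of Theorem \ref{ellipticerrorbound}. By \eqref{recon}, $\omega^m$ is the unique $H^1_0(\Omega)$-solution of $-a\Delta \omega^m = A^m$ with homogeneous Dirichlet data, so it suffices to verify $A^m \in C(\Omega)$; standard elliptic regularity then yields $\omega^m \in H^1_0(\Omega)\cap C(\bar\Omega)$. For $m = 0$ we have $A^0 = -a\Delta u_0 \in C(\Omega)$ since $u_0 \in C^2(\Omega)$. For $m \geq 1$ we have $A^m = f^{m-1} - U_t\big|_{I_m}$; here, on $I_m$ the time-linear interpolant $U$ has the constant derivative $U_t\big|_{I_m} = (U^m - U^{m-1})/k_m \in \mathcal{V}_h^m \subset C(\bar\Omega)$, and $f^{m-1} = f(\cdot,t_{m-1},U^{m-1})$ is continuous because $f$ is continuous and $U^{m-1}$ is continuous, whence $A^m \in C(\bar\Omega) \subset C(\Omega)$.

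Next I would verify that $U^m$ is precisely the conforming finite element approximation of $\omega^m$, i.e.\ that $a(\nabla U^m,\nabla V^m) = (A^m,V^m)$ for all $V^m \in \mathcal{V}_h^m$. For $m = 0$, this is \eqref{IC} multiplied through by $a$, using $A^0 = -a\Delta u_0$. For $m \geq 1$, substituting $U_t\big|_{I_m} = (U^m - U^{m-1})/k_m$ into the definition of $A^m$ gives $A^m = f^{m-1} - (U^m - U^{m-1})/k_m$, so that $(A^m,V^m) = (f^{m-1},V^m) - \big((U^m - U^{m-1})/k_m, V^m\big)$, and comparison with \eqref{IMEX} yields $(A^m,V^m) = a(\nabla U^m,\nabla V^m)$ for all $V^m \in \mathcal{V}_h^m$. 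I expect this bookkeeping step — treating $m = 0$ and $m \neq 0$ separately and using that the time-linear interpolant has piecewise-constant derivative — to be the only point requiring care; there is no genuine analytic obstacle.

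Having established these two facts, Theorem \ref{ellipticerrorbound} applied with $w = \omega^m$, $w_h = U^m$, $g = A^m$ over the mesh $\mathcal{T}_m$ gives
\[
\|\omega^m - U^m\| \leq C_{\infty}\log(1\slash\underbar{h}_m)\max_{K \in \mathcal{T}_m}\Big[ h_K^2 a^{-1}\|A^m + a\Delta U^m\|_{L^{\infty}(K)} + h_K\|\jump{\nabla U^m}\|_{L^{\infty}(\partial K \backslash \partial\Omega)}\Big] = C_{\infty}\log(1\slash\underbar{h}_m)\max_{K \in \mathcal{T}_m}\eta_S^m\big|_K,
\]
and since $\|\omega^m - U^m\| = \|\epsilon(t_m)\|$ the claim follows.
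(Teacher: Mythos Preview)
Your proposal is correct and follows exactly the paper's approach---the paper's proof is simply ``Since $A^m \in C(\Omega)$, this follows directly from Theorem \ref{ellipticerrorbound}''---and you have helpfully spelled out the verifications that $A^m$ is continuous and that $U^m$ is the Galerkin approximation to $\omega^m$. One small slip: for $m \ge 1$ the time derivative $(U^m - U^{m-1})/k_m$ need not lie in $\mathcal{V}_h^m$ since $U^{m-1} \in \mathcal{V}_h^{m-1}$, but it is nonetheless in $C(\bar\Omega)$ as a difference of continuous piecewise polynomials, so the conclusion $A^m \in C(\Omega)$ and the rest of the argument are unaffected.
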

\begin{proof}
Since $A^m \in C(\Omega)$, this follows directly from Theorem \ref{ellipticerrorbound}.
\end{proof}
To begin construction of the error equation, we first deduce from \eqref{recon} and \eqref{omegadef} that
\begin{equation}
\begin{aligned}
\label{omegaae2}
-a \Delta \omega = \ell_{m-1} A^{m-1} + \ell_{m} A^m,
\end{aligned}
\end{equation}
for any $t \in \bar{I}_m$. We then subtract \eqref{omegaae2} from \eqref{model_strong} to obtain
\begin{equation}
\begin{aligned}
u_t - a\Delta \rho = f(u) -\ell_{m-1} A^{m-1} - \ell_{m} A^m.
\end{aligned}
\end{equation}
Adding and subtracting $f(\omega)$ and $f(U)$ then yields
\begin{equation}
\begin{aligned}
u_t - a\Delta \rho = f(\rho+\omega)-f(\omega)+f(\omega)-f(U)+f(U) -\ell_{m-1} A^{m-1} - \ell_{m} A^m.
\end{aligned}
\end{equation}
Finally, adding and subtracting $\omega_t$ and $U_t$ gives the \emph{error equation}
\begin{equation}
\begin{aligned}
\label{erroreqn}
\rho_t - a\Delta \rho & =  f(\rho+\omega)-f(\omega)+f(\omega)-f(U)+ R_T-\epsilon_t.
\end{aligned}
\end{equation}
where $R_T$ is the \emph{temporal residual} given by $R_T := f(U)-\ell_{m-1} A^{m-1} - \ell_{m} A^m - U_t$.  It can be easily seen that $R_T$ is of optimal order in time by substituting $A^{m-1}$ and $A^m$. Using the temporal residual, we then define the \emph{time estimator} $\eta_T^m$ on each time interval $I_m$ by
\begin{equation}
\begin{aligned}
\notag
\eta_T^m := \int_{I_m} ||R_T(s)|| \dif s.
\end{aligned}
\end{equation}

\subsection{Fixed Point Argument}

We now seek to show that \eqref{erroreqn} has a unique solution $\rho \in \mathcal{B}_m$ where $\mathcal{B}_m$ is the closed ball of radius $\delta_m \psi_m$ centered on zero in the $||\cdot||_m$ norm and where $\delta_m \in [1,\infty)$ is a parameter to be determined with $\psi_m$ chosen such that 
\begin{equation}
\label{psidef1}
||\rho(t_{m-1})|| +  \int_{I_m} ||\epsilon(s)||\mathcal{L}(s,||U(s)||,||U(s)||+||\epsilon(s)||) \dif s+  \eta_T^m + \int_{I_m} ||\epsilon_t|| \dif s  \leq \psi_m.
\end{equation}

To do this, we will use the Banach Fixed Point Theorem to show that $\Phi_m$ given by
\begin{equation}
\begin{aligned}
\notag
\Phi_m(v)(t) & := \mathrm{e}^{(t-t_{m-1}) \Delta} \rho(t_{m-1}) + \int_{t_{m-1}}^t\mathrm{e}^{(t-s) \Delta} [f(v+\omega)-f(\omega)+f(\omega)-f(U)] \dif s  \\
                    & + \int_{t_{m-1}}^t\mathrm{e}^{(t-s) \Delta} [R_T-\epsilon_t ] \dif s,
\end{aligned}
\end{equation}
$t \in \bar{I}_m$, has a unique fixed point $\rho \in \mathcal{B}_m$ which by Duhamel's Principle must also solve \eqref{erroreqn}. To satisfy the criterea of the Banach Fixed Point Theorem, we must show two things:
\smallskip

(1) That $\Phi_m$ maps $\mathcal{B}_m$ onto itself.
\smallskip

(2) That $\Phi_m$ is a contraction mapping.

\smallskip

We begin the verification of these criteria by applying Theorem \ref{maxprinciple} to $\Phi_m$. Thus for any $t \in \bar{I}_m$ and $v \in \mathcal{B}_m$ we have
\begin{equation}
\begin{aligned}
||\Phi_m(v)(t)|| & \leq  ||\rho(t_{m-1})|| + \int_{t_{m-1}}^t ||f(v+\omega)-f(\omega)|| \dif s + \int_{I_m} ||f(\omega)-f(U)|| \dif s  \\
                    & + \int_{I_m} ||R_T(s)|| \dif s+ \int_{I_m} ||\epsilon_t|| \dif s.
\end{aligned}
\end{equation}
Using \eqref{eq:Lip} with the monotonicity of $\mathcal{L}$ and recalling \eqref{psidef1} we obtain
\begin{equation}
\begin{aligned}
\label{phibound1}
||\Phi_m(v)(t)|| & \leq  \psi_m + \int_{t_{m-1}}^t ||v(s)||\mathcal{L}(s,||v(s)||+||U(s)||+||\epsilon(s)||,||U(s)||+||\epsilon(s)||) \dif s.
\end{aligned}
\end{equation}
In order to bound this further, we require another lemma.
\begin{lemma}
\label{spacecorollary}
For any $t \in \bar{I}_m$, the estimate
\begin{equation}
\begin{aligned}
\notag
||\epsilon(t)|| \leq C_{\infty}\xi_m,
\end{aligned}
\end{equation}
holds where
\begin{equation}
\begin{aligned}
\notag
\xi_m := \max\!\left\{\log(1 \slash \underbar{h}_{m-1})\!\max_{K \in \mathcal{T}_{m-1}} \eta_S^{m-1} \big|_K, \, \log(1 \slash \underbar{h}_m)\! \max_{K \in \mathcal{T}_m} \eta_S^m \big|_K \!\right\}\!.
\end{aligned}
\end{equation}
\end{lemma}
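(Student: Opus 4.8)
The plan is to exploit the fact that $\epsilon(t)$ is, by construction (see \eqref{omegadef} and the definition of $U(t)$), the linear-in-$t$ interpolant on $\bar I_m$ of the nodal elliptic errors $\epsilon(t_{m-1}) = \omega^{m-1}-U^{m-1}$ and $\epsilon(t_m) = \omega^m-U^m$. Indeed, since $U(t) = \ell_{m-1}(t)U^{m-1}+\ell_m(t)U^m$ and $\omega(t) = \ell_{m-1}(t)\omega^{m-1}+\ell_m(t)\omega^m$, subtracting gives $\epsilon(t) = \ell_{m-1}(t)\epsilon(t_{m-1})+\ell_m(t)\epsilon(t_m)$ for all $t\in\bar I_m$. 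The first step is therefore simply to record this identity.

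Next I would take norms and use that the Lagrange basis functions $\ell_{m-1},\ell_m$ are nonnegative on $\bar I_m$ and satisfy $\ell_{m-1}(t)+\ell_m(t)=1$ there, so that for every $t\in\bar I_m$,
\begin{equation}
\notag
\|\epsilon(t)\| \le \ell_{m-1}(t)\|\epsilon(t_{m-1})\| + \ell_m(t)\|\epsilon(t_m)\| \le \max\{\|\epsilon(t_{m-1})\|,\,\|\epsilon(t_m)\|\}.
\end{equation}
The third step is to bound each of the two nodal terms. For $\|\epsilon(t_m)\|$ this is exactly Lemma \ref{spaceest}, giving $C_\infty\log(1/\underbar h_m)\max_{K\in\mathcal T_m}\eta_S^m|_K$; for $\|\epsilon(t_{m-1})\|$ one applies Lemma \ref{spaceest} with $m$ replaced by $m-1$ (valid since the conforming FE discretization of \eqref{recon} at node $t_{m-1}$ is \eqref{IC} when $m-1=0$ and \eqref{IMEX} otherwise, and $A^{m-1}\in C(\Omega)$), giving $C_\infty\log(1/\underbar h_{m-1})\max_{K\in\mathcal T_{m-1}}\eta_S^{m-1}|_K$. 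Taking the maximum of the two bounds and factoring out the common constant $C_\infty$ yields $\|\epsilon(t)\|\le C_\infty\xi_m$ with $\xi_m$ as defined in the statement.

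There is essentially no obstacle here: the only thing to be slightly careful about is the edge case $m=1$, where $t_{m-1}=t_0=0$ and the relevant elliptic problem is \eqref{IC} with $A^0=-a\Delta u_0$; since $u_0\in C^2(\Omega)\cap C(\bar\Omega)$ we have $A^0\in C(\Omega)$, so Lemma \ref{spaceest} (via Theorem \ref{ellipticerrorbound}) still applies and $\eta_S^0$ is well defined. One should also note that the interpolation identity for $\epsilon$ requires $\omega^{m-1}$ and $\omega^m$ to be the elliptic reconstructions associated to the same two meshes as $U^{m-1}$ and $U^m$, which is precisely how $\omega$ was defined in \eqref{recon}--\eqref{omegadef}. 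With these remarks the proof is a two-line computation.
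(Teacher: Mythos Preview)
Your proof is correct and follows exactly the approach of the paper: use that $\epsilon$ is linear in $t$ on $\bar I_m$ so that $\|\epsilon(t)\|$ is bounded by its value at one of the two endpoints, then apply Lemma~\ref{spaceest} at $t_{m-1}$ and $t_m$. The paper's own proof is a one-line version of precisely this argument.
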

\begin{proof}
Since $\epsilon$ is linear in time, its maximal value occurs at either the left or right end point {\bf --} the result then follows directly from Lemma \ref{spaceest}.
\end{proof}

Applying Lemma \ref{spacecorollary} to \eqref{phibound1}, using the monotonicity of $\mathcal{L}$ and noting that $v \in \mathcal{B}_m$ yields
\begin{equation}
\begin{aligned}
||\Phi_m(v)||_m & \leq  \psi_m\!\left[1+ \delta_m \int_{I_m} L(s, \delta_m) \dif s \right]\!.
\end{aligned}
\end{equation}
where
\begin{equation}
\begin{aligned}
\notag
L(s,\delta) := \mathcal{L}(s,\delta\psi_m + ||U(s)|| + C_{\infty}\xi_m, \delta\psi_m + ||U(s)|| + C_{\infty}\xi_m), \qquad s \in I_m, \,\,\delta \in [1,\infty).
\end{aligned}
\end{equation}
Thus we obtain that property (1) is satisfied if 
\begin{equation}
\tag{$\star$}
1+ \delta_m \int_{I_m} L(s, \delta_m) \dif s \leq \delta_m.
\end{equation}

To show that property (2) holds, let $v_1, v_2 \in \mathcal{B}_m$ then the definition of $\Phi_m$ implies that
\begin{equation}
\begin{aligned}
(\Phi_m(v_1)-\Phi_m(v_2))(t) & =  \int_{t_{m-1}}^t\mathrm{e}^{(t-s) \Delta} [f(v_1+\omega)-f(v_2+\omega)] \dif s, \qquad t \in \bar{I}_m.
\end{aligned}
\end{equation}
Applying Theorem \ref{maxprinciple} and Lemma \ref{spacecorollary} together with the monotonicity property of $\mathcal{L}$ and noting that $v_1, v_2 \in \mathcal{B}_m$ we obtain
\begin{equation}
\begin{aligned}
& ||(\Phi_m(v_1)-\Phi_m(v_2))(t)|| \leq \int_{I_m} ||f(v_1+\omega)-f(v_2+\omega)|| \dif s \\
& \leq \int_{I_m} ||(v_1 - v_2)(s)|| \mathcal{L}(s,||v_1(s)||+||U(s)||+||\epsilon(s)||,||v_2(s)||+||U(s)||+||\epsilon(s)||) \dif s \\
& \leq \int_{I_m} L(s,\delta_m) ||(v_1 - v_2)(s)||\dif s.
\end{aligned}
\end{equation}
Therefore,
\begin{equation}
\begin{aligned}
||\Phi_m(v_1)-\Phi_m(v_2)||_m & \leq ||v_1-v_2||_m \int_{I_m} L(s, \delta_m) \dif s,
\end{aligned}
\end{equation}
which is a contraction if  
\begin{equation}
\tag{$\star\star$}
\int_{I_m} L(s,\delta_m) \dif s < 1.
\end{equation}
Note that $(\star)\!\!\implies\!\!(\star\star$) since coupled with the fact that $\delta_m \in [1,\infty)$, $(\star)$  implies that
\begin{equation}
\int_{I_m} L(s, \delta_m) \dif s \leq 1 - \delta^{-1}_m < 1.
\end{equation}
Thus if ($\star$) is satisfied then by the Banach Fixed Point Theorem, \eqref{erroreqn} has a unique solution $\rho \in \mathcal{B}_m$. As we have \emph{choice} over the value $\delta_m$ can take in $(\star)$ then for practical reasons we choose $\delta_m \in [1,\infty)$ to be, if it exists, the \emph{smallest} root of the function $\phi_m:[1,\infty) \to \mathbb{R}$ defined by
\begin{equation}
\notag
\phi_m(\delta) := 1+ \delta\!\left[\int_{I_m} L(s, \delta) \dif s -1 \right]\!.
\end{equation}

Furthermore, we can in fact obtain a tighter bound on $\rho$ under no additional assumptions. Indeed, we now know that if $\delta_m$ exists that $\Phi_m(\rho) = \rho \in \mathcal{B}_m$ satisfies \eqref{phibound1}; therefore, upon applying Gronwall's inequality and Lemma \ref{spacecorollary} to \eqref{phibound1} we immediately deduce the bound
\begin{equation}
||\rho||_m \leq r_m \psi_m,
\end{equation}
where $r_m \geq 1$ is given by 
\begin{equation}
\notag
r_m = \exp\!\bigg(\int_{I_m} \mathcal{L}(s,\delta_m\psi_m+ ||U(s)|| + C_{\infty}\xi_m,||U(s)|| + C_{\infty}\xi_m) \dif s \bigg).
\end{equation}

\subsection{Computable Error Bound} Now that $\delta_m$ has been defined, we must characterize $\psi_m$ from \eqref{psidef1} in an \emph{a posteriori} fashion in order to obtain a fully computable error bound. To do this, we must first estimate the term $||\rho(t_{m-1})||$ along with the remaining terms containing $\epsilon$ in \eqref{psidef1}.

In the previous subsection, we deduced that $\rho \in \mathcal{B}_m$ if $\delta_m$ exists; however, we assumed \emph{a priori} knowledge of the existence of $\rho(t_{m-1})$ in \eqref{psidef1}. To rectify this, we note that for $m > 1$ we have
\begin{equation}
\label{mrhobound}
||\rho(t_{m-1})|| \leq ||\rho||_{m-1} \leq r_{m-1}\psi_{m-1},
\end{equation}
if $\rho \in \mathcal{B}_{m-1}$. Similarly, $\rho \in \mathcal{B}_{m-1}$ if $\delta_{m-1}$ exists and if we can verify the existence of $\rho(t_{m-2})$. Continuing in this way, we see by recursion that $\rho \in \mathcal{B}_m$ provided that $\delta_1, \ldots, \delta_{m}$ exist; however, this still leaves us with the task of estimating $\rho(0)$ for the first interval. To this end, we rewrite $\rho$ as $\rho = e - \epsilon$ and utilize Lemma \ref{spaceest} to obtain
\begin{equation}
\label{initrhobound}
||\rho(0)|| \leq ||e(0)||+ ||\epsilon(0)|| \leq \eta_I,
\end{equation}
where $\eta_I$ is the \emph{initial condition estimator} given by
\begin{equation*}
\eta_I := ||e(0)|| + C_{\infty}\log(1 \slash \underbar{h}_0)\!\max_{K \in \mathcal{T}_0} \eta_S^0 \big |_K.
\end{equation*}
With regards to the remaining terms in \eqref{psidef1}, the first term containing $\epsilon$ can be estimated directly by using the monotonicity of $\mathcal{L}$ combined with the bound of Lemma \ref{spacecorollary}, viz.,
\begin{equation}
\label{firstepsbound}
\int_{I_m} ||\epsilon(s)||\mathcal{L}(s,||U(s)||,||U(s)||+||\epsilon(s)||) \dif s \leq C_{\infty}\xi_m\!\int_{I_m} \mathcal{L}(s,||U(s)||,||U(s)||+C_{\infty}\xi_m) \dif s.
\end{equation}
To bound the second $\epsilon$ term in \eqref{psidef1}, we require a lemma.
\begin{lemma}\label{secondepsbound}
The following bound holds
\begin{equation*}
\int_{I_m} ||\epsilon_t|| \dif s \leq C_{\infty}{\xi}_m',
\end{equation*}
with
\begin{equation}
\begin{aligned}
\notag
\xi_m' & := \log(1 \slash \,\widehat{\underbar{h}}_m) \, k_m\!\!\max_{\widehat{K} \in \mathcal{T}_{m-1} \wedge \mathcal{T}_m} \max_{\widecheck{K} \subseteq \widehat{K}} {\tightoverset{\bigcdot}{\eta}}_S^m \big|_\widecheck{K},
\end{aligned}
\end{equation}
where $\widehat{h}_m := \min\{\underbar{h}_{m-1}, \underbar{h}_m\}$ and where ${\tightoverset{\bigcdot}{\eta}}_S^m$ is the space derivative estimator given by
\begin{equation}
\begin{aligned}
\notag
\hspace{-2.5mm}{\tightoverset{\bigcdot}{\eta}}_S^m \big|_\widecheck{K} := h^2_{\widehat{K}} k_m^{-1}a^{-1}||A^m-A^{m-1}+a\Delta (U^m-U^{m-1})||_{L^{\infty}(\widecheck{K})} + h_{\widehat{K}}k^{-1}_m ||\jump{\nabla(U^m-U^{m-1})}||_{L^{\infty}(\partial \widecheck{K} \backslash \partial \Omega)},
\end{aligned}
\end{equation}
for $\mathcal{T}_{m-1} \vee \mathcal{T}_m \ni \widecheck{K} \subseteq \widehat{K} \in \mathcal{T}_{m-1} \wedge \mathcal{T}_m$. As before, ${C}_\infty$ is a constant that is independent of the maximum mesh-size, $a$, $u$ and $U$ but may be dependent upon the size of the domain $\Omega$ as well as the number of refinement levels between $\mathcal{T}_{m-1}$ and $\mathcal{T}_m$.
\end{lemma}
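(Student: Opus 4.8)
The plan is to reduce $\int_{I_m}||\epsilon_t||\,\dif s$ to the pointwise norm of a single elliptic finite element error and then to estimate that quantity with the regularised Green's function machinery already underlying Theorem~\ref{ellipticerrorbound} and Lemma~\ref{spaceest}; the one genuinely new ingredient is the bookkeeping forced by the change of spatial mesh between $t_{m-1}$ and $t_m$.

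\emph{Reduction to a single elliptic error.} Put $\epsilon^j:=\omega^j-U^j$. Since $\omega$ and $U$ are linear in time on $\bar{I}_m$ (by \eqref{omegadef} and the analogous definition of $U$), one has $\epsilon(t)=\ell_{m-1}(t)\epsilon^{m-1}+\ell_m(t)\epsilon^m$, so $\epsilon_t|_{I_m}=(\epsilon^m-\epsilon^{m-1})/k_m$ is constant in time and hence $\int_{I_m}||\epsilon_t||\,\dif s=||\epsilon^m-\epsilon^{m-1}||$. Writing $z:=\omega^m-\omega^{m-1}$ and $z_h:=U^m-U^{m-1}$, it thus suffices to estimate $||z-z_h||$. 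Subtracting the instances of \eqref{recon} at levels $m$ and $m-1$ shows that $z\in H^1_0(\Omega)$ is the weak solution of $-a\Delta z=A^m-A^{m-1}$ in $\Omega$, $z=0$ on $\partial\Omega$; since $A^m,A^{m-1}\in C(\Omega)$ (cf.~Lemma~\ref{spaceest}), $z\in C(\bar{\Omega})$ and the setting of Theorem~\ref{ellipticerrorbound} applies to this problem. On the discrete side, rewriting \eqref{IMEX} via $U_t|_{I_m}=(U^m-U^{m-1})/k_m$ and $A^m=f^{m-1}-U_t|_{I_m}$ (using \eqref{IC} for the level $m-1=0$) gives $a(\nabla U^m,\nabla V)=(A^m,V)$ for all $V\in\mathcal{V}_h^m$ and $a(\nabla U^{m-1},\nabla V)=(A^{m-1},V)$ for all $V\in\mathcal{V}_h^{m-1}$.

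\emph{Orthogonality over the coarsening and the pointwise bound.} Every element of $\mathcal{T}_{m-1}$ and of $\mathcal{T}_m$ is contained in an element of the finest common coarsening $\mathcal{T}_{m-1}\wedge\mathcal{T}_m$, whence $\mathcal{V}_h(\mathcal{T}_{m-1}\wedge\mathcal{T}_m)\subseteq\mathcal{V}_h^{m-1}\cap\mathcal{V}_h^m$; subtracting the two discrete identities just obtained from the two instances of \eqref{recon} then yields
\begin{equation*}
a\big(\nabla(z-z_h),\nabla v_h\big)=0\qquad\text{for every } v_h\in\mathcal{V}_h(\mathcal{T}_{m-1}\wedge\mathcal{T}_m).
\end{equation*}
With this I would repeat, for the pair $(z,z_h)$, the pointwise argument of \cite{DK14} behind Theorem~\ref{ellipticerrorbound}: choose $x_0\in\bar{\Omega}$ with $||z-z_h||=|(z-z_h)(x_0)|$; represent $(z-z_h)(x_0)$ through a regularised Dirac mass at a scale comparable to $\widehat{\underbar{h}}_m=\min\{\underbar{h}_{m-1},\underbar{h}_m\}$, the finest length occurring in either mesh; use the displayed orthogonality to subtract from the corresponding regularised Green's function its quasi-interpolant into $\mathcal{V}_h(\mathcal{T}_{m-1}\wedge\mathcal{T}_m)$; and integrate the term $a(\nabla z_h,\nabla\,\cdot\,)$ by parts element by element over the coarsest common refinement $\mathcal{T}_{m-1}\vee\mathcal{T}_m$, the mesh on whose cells $z_h=U^m-U^{m-1}$ is genuinely piecewise polynomial. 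This produces the element residual $A^m-A^{m-1}+a\Delta(U^m-U^{m-1})$ on the cells $\widecheck K\in\mathcal{T}_{m-1}\vee\mathcal{T}_m$ and the jump residual $\jump{\nabla(U^m-U^{m-1})}$ on their interior faces, while the weights $h_{\widehat K}^2$, $h_{\widehat K}$ and the factor $\log(1/\widehat{\underbar{h}}_m)$ come, exactly as in Theorem~\ref{ellipticerrorbound}, from the interpolation error on the coarse mesh $\mathcal{T}_{m-1}\wedge\mathcal{T}_m$ and from the $L^1(\Omega)$ bound on the Hessian of the regularised Green's function. Grouping the finitely many cells and faces of $\mathcal{T}_{m-1}\vee\mathcal{T}_m$ lying inside a given coarse element $\widehat K$ costs a factor controlled by the number of refinement levels between $\mathcal{T}_{m-1}$ and $\mathcal{T}_m$, which accounts for the stated dependence of $C_\infty$. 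Collecting everything gives
\begin{equation*}
||z-z_h||\le C_\infty\log(1/\widehat{\underbar{h}}_m)\,k_m\!\!\max_{\widehat K\in\mathcal{T}_{m-1}\wedge\mathcal{T}_m}\ \max_{\widecheck K\subseteq\widehat K}{\tightoverset{\bigcdot}{\eta}}_S^m\big|_{\widecheck K}=C_\infty\xi_m',
\end{equation*}
because $k_m{\tightoverset{\bigcdot}{\eta}}_S^m|_{\widecheck K}$ is precisely the local quantity delivered by the residual estimate; since $\int_{I_m}||\epsilon_t||\,\dif s=||z-z_h||$, this is the assertion.

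\emph{Main obstacle.} The only delicate point is this mesh-change bookkeeping: Galerkin orthogonality is available only against the coarsening $\mathcal{T}_{m-1}\wedge\mathcal{T}_m$, whereas $z_h$ is piecewise polynomial only on the refinement $\mathcal{T}_{m-1}\vee\mathcal{T}_m$, so one must pair coarse-mesh interpolation weights with fine-mesh residuals and control the combinatorial factor coming from the refinement depth; concretely, one verifies that restricting the quasi-interpolant of the regularised Green's function from $\widehat K$ to its sub-cells and sub-faces still yields the $h_{\widehat K}^2$ and $h_{\widehat K}$ scalings, at the cost of a constant depending on that depth. Everything else is a routine repetition of the proof of Theorem~\ref{ellipticerrorbound}.
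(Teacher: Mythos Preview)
Your proposal is correct and follows essentially the same approach as the paper: the paper's proof simply cites that $\epsilon_t$ is Galerkin orthogonal to $\mathcal{V}_h^{m-1}\cap\mathcal{V}_h^m$ (Corollary~2.9 of \cite{DLM09}) and then invokes the analysis of \cite{DK14} with a quasi-interpolant built on $\mathcal{T}_{m-1}\wedge\mathcal{T}_m$. You have unpacked precisely these two ingredients---deriving the orthogonality directly from \eqref{IC}, \eqref{IMEX} and \eqref{recon}, and spelling out how the coarse-mesh interpolation weights pair with the fine-mesh residuals---so your argument is a fleshed-out version of the paper's sketch rather than a different route.
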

\begin{proof}
We note the observation made in Corollary 2.9 of \cite{DLM09} that $\epsilon_t$ is Galerkin orthogonal to the space $\mathcal{V}_h^{m-1} \cap \mathcal{V}_h^{m}$. The stated bound then follows by conducting the error analysis as in \cite{DK14} but with a quasi-interpolant based on the mesh $\mathcal{T}_{m-1}\wedge\mathcal{T}_m$.
\end{proof}

Finally, applying the bounds of \eqref{mrhobound}, \eqref{initrhobound}, \eqref{firstepsbound} and Lemma \ref{secondepsbound} to the left-hand side of \eqref{psidef1}, we see that we can define $\psi_m$ to be the computable \emph{a posteriori} quantity given by
\begin{equation*}
\psi_m  := \begin{cases}\displaystyle \eta_I +  C_{\infty}\xi_m\!\int_{I_m} \mathcal{L}(s,||U(s)||,||U(s)||+C_{\infty}\xi_m) \dif s +\eta_T^m+C_\infty\xi_m'\quad & \text{if } m = 1 \\
\displaystyle r_{m-1}\psi_{m-1}+ C_{\infty}\xi_m\!\int_{I_m} \mathcal{L}(s,||U(s)||,||U(s)||+C_{\infty}\xi_m) \dif s+\eta_T^m+C_\infty\xi_m' \quad & \text{if } m \neq 1 \end{cases}.
\end{equation*}

With $\psi_m$ defined, all components of the error bound are now in place as well as fully computable and we are ready to state the main result.
\begin{theorem}
\label{maintheorem}
Suppose that $\delta_1, \ldots, \delta_M$ exist then the $L^{\infty}L^{\infty}$ error of the IMEX method \eqref{IMEX} satisfies the a posteriori bound
\begin{equation}
\notag
\max_{1 \leq m \leq M} ||e||_m \leq r_M \psi_M + C_{\infty}\max_{0 \leq m \leq M} \log(1 \slash \underbar{h}_m)\! \max_{K \in \mathcal{T}_m} \eta_S^m \big|_K.
\end{equation}
\end{theorem}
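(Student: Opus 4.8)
The plan is to assemble the theorem from the pieces already built in Section~4, using the decomposition $e=\rho+\epsilon$ and a triangle inequality $\|e\|_m\le\|\rho\|_m+\|\epsilon\|_m$ on each interval, followed by $\max_{1\le m\le M}\|e\|_m\le\max_m\|\rho\|_m+\max_m\|\epsilon\|_m$. The two maxima are then treated separately: the first collapses to $r_M\psi_M$ and the second to the stated spatial term.

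First I would handle the parabolic error. Under the hypothesis that $\delta_1,\dots,\delta_M$ exist, the recursion described after \eqref{mrhobound} applies: $\rho(0)$ is controlled a~posteriori by $\eta_I$ via \eqref{initrhobound}, and then, inductively, existence of $\delta_m$ together with the bound $\|\rho(t_{m-1})\|\le r_{m-1}\psi_{m-1}$ (for $m>1$) guarantees that the Banach Fixed Point Theorem applies on $I_m$, so that $\rho\in\mathcal{B}_m$ and, via Gronwall, $\|\rho\|_m\le r_m\psi_m$ for every $m=1,\dots,M$. It then remains to observe that the sequence $r_m\psi_m$ is nondecreasing: from the definition of $\psi_m$ for $m\neq 1$ we have $\psi_m\ge r_{m-1}\psi_{m-1}$ since the additional terms are nonnegative, and since $r_m\ge 1$ this gives $r_m\psi_m\ge\psi_m\ge r_{m-1}\psi_{m-1}$. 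Hence $\max_{1\le m\le M}\|\rho\|_m\le\max_{1\le m\le M}r_m\psi_m=r_M\psi_M$.

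Next I would bound the elliptic error. Lemma~\ref{spacecorollary} gives $\|\epsilon(t)\|\le C_\infty\xi_m$ for every $t\in\bar I_m$, hence $\|\epsilon\|_m\le C_\infty\xi_m$. Since $\xi_m$ is by definition the maximum of $\log(1/\underbar{h}_{m-1})\max_{K\in\mathcal{T}_{m-1}}\eta_S^{m-1}|_K$ and $\log(1/\underbar{h}_m)\max_{K\in\mathcal{T}_m}\eta_S^m|_K$, taking the maximum over $m=1,\dots,M$ sweeps through all levels $0,1,\dots,M$, so $\max_{1\le m\le M}C_\infty\xi_m\le C_\infty\max_{0\le m\le M}\log(1/\underbar{h}_m)\max_{K\in\mathcal{T}_m}\eta_S^m|_K$. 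Combining the two estimates yields the claimed bound.

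The genuine difficulty of the result — the construction of the error equation \eqref{erroreqn}, the elliptic-reconstruction splitting, and the local-in-time fixed point argument producing $\delta_m$, $r_m$ and the computable $\psi_m$ — has already been carried out in the preceding subsections, so the proof of the theorem itself is essentially bookkeeping. The only point requiring care is making sure the per-interval bounds telescope correctly into the single quantity $r_M\psi_M$; this is exactly what the monotonicity observation $r_m\psi_m\le r_{m+1}\psi_{m+1}$, built into the recursive definition of $\psi_m$, is there to guarantee.
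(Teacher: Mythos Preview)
Your proposal is correct and follows essentially the same route as the paper: decompose $e=\rho+\epsilon$, use the per-interval bound $\|\rho\|_m\le r_m\psi_m$ coming from the fixed-point/Gronwall argument, and control $\|\epsilon\|_m$ via Lemma~\ref{spacecorollary}. The only difference is that you make explicit the monotonicity $r_m\psi_m\le r_{m+1}\psi_{m+1}$ (so that $\max_m r_m\psi_m=r_M\psi_M$), whereas the paper simply asserts $\max_m\|\rho\|_m\le r_M\psi_M$ without spelling this out.
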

\begin{proof}
Since $\delta_1,\ldots,\delta_M$ exist then by the exposition of the previous subsection we have
\begin{equation}
\notag
||\rho||_m \leq r_m\psi_m,
\end{equation}
for any $1 \leq m \leq M$. Noting the decomposition $e = \rho + \epsilon$, we obtain that
\begin{equation}
\notag
\max_{1 \leq m \leq M} ||e||_m \leq \max_{1 \leq m \leq M} ||\rho||_m + \max_{1 \leq m \leq M} ||\epsilon||_m \leq r_M \psi_M + \max_{1 \leq m \leq M} ||\epsilon||_m.
\end{equation}
The stated result then follows from Lemma \ref{spacecorollary}.
\end{proof}

Given these results, a natural question to ask is whether $\delta_m \in [1,\infty)$, the smallest root of $\phi_m$, can actually exist at all. This is the focus of our next lemma.
\begin{lemma}
If the time step length $k_m$ is small enough then $\phi_m$ has a root in $[1,\infty)$.
\end{lemma}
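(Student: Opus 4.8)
The plan is to exhibit a root of $\phi_m$ in the \emph{bounded} subinterval $[1,2]$ of $[1,\infty)$ via the Intermediate Value Theorem, using that $\phi_m(1)\ge 0$ holds with no restriction on $k_m$ whereas $\phi_m(2)<0$ once $k_m$ is small. First I would note that $\phi_m$ is continuous on $[1,\infty)$: for $\delta$ in a bounded set $[1,N]$ the integrand $L(s,\delta)$ is dominated, by the monotonicity of $\mathcal L$ in its second and third arguments, by the ($k_m$-fixed) function $\mathcal L(s,N\psi_m+||U(s)||+C_\infty\xi_m,N\psi_m+||U(s)||+C_\infty\xi_m)\in L^1(I_m)$, so dominated convergence shows $\delta\mapsto\int_{I_m}L(s,\delta)\dif s$ is continuous and hence so is $\phi_m$. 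Next, since $\mathcal L\ge 0$ we have $L\ge 0$, whence
\[
\phi_m(1)=\int_{I_m}L(s,1)\dif s\ge 0.
\]

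The heart of the argument is the estimate $\phi_m(2)<0$, i.e. $\int_{I_m}L(s,2)\dif s<\tfrac12$, for $k_m$ small. Fix $t_{m-1}$, the mesh sequence and $U^{m-1}$ and let $k_m\to 0^+$. Stability of \eqref{IMEX} gives that $U^m$ stays bounded (indeed converges) and that $U_t\big|_{I_m}=(U^m-U^{m-1})/k_m$ stays bounded; hence $A^m$ stays bounded, so the space estimators and therefore $\xi_m$ stay bounded, and since $\xi_m'$ contains a factor $k_m$ multiplying divided differences that are at worst $O(k_m^{-1})$ it too stays bounded. Moreover $\eta_T^m\le k_m\sup_{I_m}||R_T||\to 0$ and $\int_{I_m}\mathcal L(s,||U(s)||,||U(s)||+C_\infty\xi_m)\dif s\to 0$, so the defining formula for $\psi_m$ yields $\psi_m\le\Psi_m$ for some $\Psi_m$ independent of $k_m$ once $k_m$ is below a threshold. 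Writing $\mathcal U$ for a uniform bound on $\sup_{I_m}||U(s)||$ and $\Xi$ for one on $\xi_m$, monotonicity of $\mathcal L$ gives $L(s,2)\le\mathcal L(s,2\Psi_m+\mathcal U+C_\infty\Xi,2\Psi_m+\mathcal U+C_\infty\Xi)=:\bar L(s)$ with $\bar L\in L^1(0,T_\infty)$ independent of $k_m$; as $|I_m|=k_m\to 0$ we get $\int_{I_m}L(s,2)\dif s\le\int_{I_m}\bar L(s)\dif s\to 0$, which is below $\tfrac12$ for $k_m$ small enough, so $\phi_m(2)<0$.

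Combining $\phi_m(1)\ge 0$, $\phi_m(2)<0$ and continuity of $\phi_m$ on $[1,2]$, the Intermediate Value Theorem produces a root in $[1,2]\subseteq[1,\infty)$; in particular the smallest root $\delta_m$ exists, so the hypothesis of Theorem \ref{maintheorem} is satisfied on time step $m$ whenever $k_m$ is sufficiently small. The step I expect to be the main obstacle is the uniform-in-$k_m$ control of $\psi_m$ (and of $\xi_m$, $\xi_m'$) needed to manufacture the $k_m$-independent dominating function $\bar L$ --- this is precisely where the stability of the IMEX scheme \eqref{IMEX} and the detailed structure of the estimators are used; everything else is the elementary Intermediate Value Theorem argument.
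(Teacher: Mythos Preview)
Your proposal is correct and takes essentially the approach the paper intends: the paper omits the details, citing Lemma~3.28 of \cite{KMW16} (an ODE setting) and calling the argument ``simple'', and the Intermediate Value Theorem argument you outline---$\phi_m(1)=\int_{I_m}L(s,1)\dif s\ge 0$ unconditionally, $\phi_m(2)=2\int_{I_m}L(s,2)\dif s-1<0$ once $k_m$ is small, together with continuity of $\phi_m$---is precisely that simple proof. You have in fact gone beyond the paper by explicitly addressing the PDE-specific issue of $k_m$-uniform control of $\psi_m$, $\xi_m$ and $\xi_m'$, which does not arise in the ODE reference; one small caveat worth recording is that your stability claim $(U^m-U^{m-1})/k_m=O(1)$ tacitly requires $U^{m-1}\in\mathcal{V}_h^m$ (e.g.\ $\mathcal{T}_m=\mathcal{T}_{m-1}$ or refinement only), which is exactly how the adaptive algorithm operates at the stage where $k_m$ is being reduced.
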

\begin{proof}
We omit full details of the proof for brevity but remark that the proof is simple and essentially the same as that of Lemma 3.28 in \cite{KMW16}.
\end{proof}

We conclude this section by showing that the \emph{a posteriori} error bound of Theorem \ref{maintheorem} can be vastly simplified when $f$ is independent of $u$.
\begin{corollary}\label{findeptheorem}
Suppose that $f$ is independent of $u$ then the error of the IMEX method \eqref{IMEX} unconditionally satisfies the a posteriori bound
\begin{equation}
\notag
\max_{1 \leq m \leq M} ||e||_m \leq ||e(0)|| + \sum_{m=1}^M \eta_T^m + C_\infty \sum_{m=1}^M \xi_m'+ C_{\infty}\max_{0 \leq m \leq M} \log(1 \slash \underbar{h}_m)\! \max_{K \in \mathcal{T}_m} \eta_S^m \big|_K.
\end{equation}
\end{corollary}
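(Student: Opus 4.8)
The plan is to read this corollary off Theorem \ref{maintheorem} by switching off the nonlinear coupling. When $f$ does not depend on its third argument, the left-hand side of the local Lipschitz estimate \eqref{eq:Lip} is identically zero, so one may take $\mathcal{L} \equiv 0$; the whole argument then amounts to feeding this choice through the constructions of Section 4 and recording that every auxiliary quantity degenerates.

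First I would extract the consequences of $\mathcal{L} \equiv 0$ for the fixed point argument. The function $L(s,\delta)$ defined there vanishes identically, so $\phi_m(\delta) = 1 - \delta$, whose smallest root in $[1,\infty)$ is $\delta_m = 1$. The key point is that this root exists for \emph{every} $m$ with no restriction on the data or the time steps, which is exactly what promotes the estimate of Theorem \ref{maintheorem} to an unconditional one. In the same way $r_m = \exp(0) = 1$ for all $m$, and the reaction integral $C_\infty \xi_m \int_{I_m} \mathcal{L}(s,||U(s)||,||U(s)||+C_\infty\xi_m)\dif s$ appearing in the definition of $\psi_m$ drops out. I would then unwind the recursion for $\psi_m$: it reads $\psi_1 = \eta_I + \eta_T^1 + C_\infty\xi_1'$ and $\psi_m = \psi_{m-1} + \eta_T^m + C_\infty\xi_m'$ for $m \geq 2$, so by an immediate induction $\psi_M = \eta_I + \sum_{m=1}^M \eta_T^m + C_\infty \sum_{m=1}^M \xi_m'$. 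Substituting $r_M = 1$ and this expression into the bound of Theorem \ref{maintheorem} leaves
\[
\max_{1\leq m\leq M}||e||_m \leq \eta_I + \sum_{m=1}^M \eta_T^m + C_\infty \sum_{m=1}^M \xi_m' + C_\infty \max_{0\leq m\leq M} \log(1 \slash \underbar{h}_m)\max_{K\in\mathcal{T}_m}\eta_S^m\big|_K.
\]
To conclude, I would expand $\eta_I = ||e(0)|| + C_\infty\log(1 \slash \underbar{h}_0)\max_{K\in\mathcal{T}_0}\eta_S^0|_K$ and note that the spatial part of $\eta_I$ is precisely the $m=0$ summand of the last maximum, so the two spatial contributions merge into a single term of the same form — absorbing the harmless numerical factor into the generic constant $C_\infty$ — which is exactly the asserted bound.

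The step I expect to require the most care is organisational rather than analytic: one has to make sure that no smallness condition has been hidden in the \emph{a posteriori} characterisation of $\psi_m$ — this is fine, since the inputs \eqref{mrhobound}, \eqref{initrhobound}, \eqref{firstepsbound} and Lemma \ref{secondepsbound} each hold unconditionally — and that the $\mathcal{T}_0$ elliptic estimator coming from $\eta_I$ is merged correctly with the maximum over $0 \leq m \leq M$. Beyond this bookkeeping there is no genuine obstacle: once the nonlinear term is $u$-independent, the fixed point machinery becomes trivial and only the linear, unconditionally valid residual terms survive.
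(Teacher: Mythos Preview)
Your proposal is correct and follows essentially the same route as the paper's own proof: set $\mathcal{L}\equiv 0$, read off $\phi_m(\delta)=1-\delta$ so that $\delta_m=1$ and $r_m=1$ unconditionally, then unwind the recursion for $\psi_m$ and plug into Theorem \ref{maintheorem}. The paper is terser (it simply says ``applying $\psi_m$ recursively and upon noting that $r_m=1$''), whereas you spell out the telescoping of $\psi_m$ and the absorption of the $\eta_I$ spatial term into the final maximum; the latter does cost a factor of $2$ on $C_\infty$, which the paper also silently treats as generic.
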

\begin{proof}
Since $f$ is independent of $u$, it follows from \eqref{eq:Lip} that $\mathcal{L} = 0$. We recall that $\delta_m$ is the smallest root of the function $\phi_m : [1,\infty) \to \mathbb{R}$ given by 
\begin{equation}
\notag
\phi_m(\delta) = 1+ \delta\!\left[\int_{I_m} L(s, \delta) \dif s -1 \right]\! = 1 - \delta.
\end{equation}
Therefore, $\delta_m = 1$ regardless of the size of the time step length $k_m$ and so the \emph{a posteriori} error bound of Theorem \ref{maintheorem} holds unconditionally. The stated result then follows from Theorem \ref{maintheorem} by applying $\psi_m$ recursively and upon noting that $r_m = 1$.
\end{proof}

\begin{remark}
The \emph{a posteriori} error bound of Corollary \ref{findeptheorem} is essentially that of Theorem 4.2 in \cite{DLM09} but with a sharper time estimator.
\end{remark}

\section{Adaptive Algorithms}

In this section, we propose an adaptive algorithm that is based on the idea of minimization of the \emph{a posteriori} error bound of Theorem \ref{maintheorem}. Ultimately, our goal is an adaptive algorithm that is applicable to both blow-up and fixed-time problems. Here, we consider an adaptive strategy that is based on using the residuals to control the time step lengths and mesh sizes but we emphasize that other adaptive strategies are possible for blow-up problems. For example, in \cite{HW16}, an existence analysis is carried out for implicit approximations to \eqref{model_strong} via fixed point arguments and the results are used to select the length of the time steps in the scheme. A similar adaptive strategy could be used here in the sense that we could continue to reduce the size of the time step on $I_m$ until $\delta_m$ exists and then fix this time step length before moving on to the next interval. Indeed, it was shown in \cite{HW16} that choosing the size of the time steps in a way analagous to this can lead to superconvergence to the blow-up time; however, such a strategy does also come with certain disadvantages.  Firstly, it is unclear how to generalize this approach to the case where $f$ is independent of $u$ since then $\delta_m$ \emph{always} exists. Moreover, it is unclear what a `natural' stop critereon for such an adaptive strategy should be whereas choosing the time step lengths according to the size of the residuals allows the non-existence of $\delta_m$ to be the stop critereon. 

We contend that a general adaptive algorithm based on the \emph{a posteriori} error bound of Theorem \ref{maintheorem} should  revert to a reasonable, well-known adaptive strategy when applied to a simple case such as that of Corollary \ref{findeptheorem}. For this reason, we first outline an adaptive algorithm based on Corollary \ref{findeptheorem} and then extend this algorithm in a logical fashion to incorporate the full bound of Theorem \ref{maintheorem}. In what follows, we give a general outline of the idea behind the adaptive algorithms but for brevity we also include the pseudocode of the full algorithm in Algorithm 1.

\subsection{Adaptive Algorithm for Corollary \ref{findeptheorem}}

The nature of the data in Corollary \ref{findeptheorem} means that the algorithm in this subsection will be for a \emph{fixed-time problem}. As such, the inputs to the algorithm include the data, the domain $\Omega$, the final time $T$, a coarse initial mesh $\mathcal{T}_0$ of $\Omega$ and an unrefined initial time step length $k_1 \leq T$.

Suppose that we are on the generic time step $m > 1$ then the backward solution $U^{m-1} \in \mathcal{V}_h^{m-1}$ and the mesh $\mathcal{T}_{m-1}$ are already fixed. To proceed on the current interval, we first set the mesh $\mathcal{T}_m = \mathcal{T}_{m-1}$ and then calculate the forward solution $U^m \in \mathcal{V}_h^m$ given by \eqref{IMEX}. The simple structure of the error bound immediately suggests defining the refinement indicators
\begin{equation}
\begin{aligned}
\notag
&{\tt ref}_T^m := \eta_T^m,  \qquad \qquad\qquad & {\tt ref}_S^m \big|_K := \max\!\Big\{{\tightoverset{}{\eta}}_S^m \big|_K, \,{\tightoverset{\bigcdot}{\eta}}_S^m \big |_K\Big\}, \,\,K \in \mathcal{T}_m.
\end{aligned}
\end{equation}
Note that $\displaystyle {\tt ref}_T^m$ is local to each time step while $\displaystyle {\tt ref}_S^m$ is local to each mesh element. As is standard for spatial adaptivity done via $L^\infty$ norm error estimates, we ignore the global logarithmic terms \cite{NSSV06}. To control the size of the time steps and the mesh elements, we introduce four tolerances: a \emph{spatial refinement tolerance} $\tt stol^+$, a \emph{spatial coarsening tolerance} $\tt stol^-$, a \emph{temporal refinement tolerance} $\tt ttol^+$ and a \emph{temporal coarsening tolerance} $\tt ttol^-$. If necessary, we begin by either refining or coarsening the time step length $k_m$ and recalculating the forward solution $U^m \in \mathcal{V}_h^m$ until 
\begin{equation}
\notag
{\tt ttol^-} \leq {\tt ref}_T^m \leq {\tt ttol^+},
\end{equation}
is satisfied. We then fix this time step length.  Next, we proceed spatially by refining all elements $K \in \mathcal{T}_m$ such that ${\tt ref}_S^m \big|_K > {\tt stol^+}$ and coarsening all elements such that ${\tt ref}_S^m \big|_K < {\tt stol^-}$. We then recalculate (if necessary) and fix the forward solution $U^m \in \mathcal{V}_h^m$. After this is done, we set $k_{m+1} = k_{m}$ and proceed to the next interval unless the total time $t_{m+1} = t_m + k_{m+1}$ would surpass the final time $T$  in which case we set $k_{m+1} = T - t_{m}$. When the final time is reached, we halt our computations.

All that remains is to deal with the coarse grid and time step length on the first interval. To do this, we first modify the space refinement indicator to account for the term $\eta_I$, viz.,
\begin{equation}
\notag
{\tt ref}_S^1 \big|_K := \max\!\Big\{||e(0)||_{L^{\infty}(K)},\,{\tightoverset{}{\eta}}_S^0 \big|_K,\,{\tightoverset{}{\eta}}_S^1 \big|_K,\, {\tightoverset{\bigcdot}{\eta}}_S^1 \big |_K\Big\}, \,\,K \in \mathcal{T}_1.
\end{equation}
To begin, we set the mesh $\mathcal{T}_1 = \mathcal{T}_0$ and calculate the backward solution $U^0 \in \mathcal{V}_h^0$ given by \eqref{IC} and the forward solution $U^1 \in \mathcal{V}_h^1$ given by \eqref{IMEX}. We then proceed, via the tolerance strategy outlined above, by refining (or coarsening) the time step length $k_1$ and the mesh $\mathcal{T}_1 = \mathcal{T}_0$ concurrently then recalculating the backward and forward solutions $U^0 \in \mathcal{V}_h^0$ and $U^1 \in \mathcal{V}_h^1$ until both ${\tt ref}_T^1 \leq {\tt ttol^+}$  and $\displaystyle \max_{K \in \mathcal{T}_1} {\tt ref}_S^1 \big|_K \leq {\tt stol^+}$ are satisfied. After this is done, we set $k_2 = k_1$ and proceed to the next interval.

\begin{remark}
The refinement and coarsening tolerances need to be chosen sufficiently far apart so that the finite element solution does not get caught in an infinite refine and coarsen loop. 
\end{remark}
\subsection{Adaptive Algorithm for Theorem \ref{maintheorem}}

In this subsection, we modify the adaptive algorithm of the previous subsection for adaptivity under the full estimator of Theorem \ref{maintheorem}; we do this in such a way that the algorithm of the previous subsection is recovered when $f$ is independent of $u$. In fact, the only changes that need to be made to the algorithm of the previous subsection are:
\smallskip

(1) The stop critereon must be altered to account for the fact that the error bound may now not necessarily hold.
\smallskip

(2) The refinement indicators must be modified for adaptivity under the full error estimator.
\smallskip

To address point (1), we recall from Theorem \ref{maintheorem} that the error bound holds on $I_m$ provided that $\delta_1, \ldots, \delta_m$ exist; therefore, after all the adaptive procedures on the current interval are complete, we attempt to calculate $\delta_m$ via a root-finding algorithm. If we find a root then we continue to the next interval (unless the final time is reached); if not, we halt our computations. 

A na\"{\i}ve approach towards addressing point (2) would be to simply use the refinement indicators of the previous subsection for adaptivity, however, doing so not only completely ignores the structure of the error estimator but also the length scales inherent to a blow-up problem. Consequently, such a refinement indicator causes excessive over-refinement close to the blow-up time which, in turn, results in suboptimal convergence \cite{CGKM16,M15,KMW16}. In order to characterize the new refinement indicators, we define
\begin{equation}
\begin{aligned}
\notag
& \widetilde{r}_0 := 1,  \qquad \qquad\qquad & \displaystyle\widetilde{r}_m := \prod_{i = 1}^m r_i,\,\, m > 1,
\end{aligned}
\end{equation}
to be the accumulation of the values $r_i$, $1 \leq i \leq m$. We remark that $\widetilde{r}_m$ is a value intimately connected with the rate of blow-up of the exact solution on the interval $(0,t_m)$ as shown in the numerical experiments of \cite{KMW16} and as we shall show here in the sequel; therefore, it is natural that this quantity should appear in our refinement indicators. With this notation at hand, the temporal refinement indicator is given by
\begin{equation}
\begin{aligned}
\notag
&{\tt ref}_T^m := \widetilde{r}^{\,-1}_{m-1}\eta_T^m.
\end{aligned}
\end{equation}
For a rigorous justification of this choice, we refer the reader to \cite{KMW16}. Defining the space refinement indicator is more involved as reviewing the error bound of Theorem \ref{maintheorem}, it is clear that some terms are affected by $r_{m}$ while others are not. The lone spatial term outside the recursive portion of the error bound is independent of ${r}_m$ which suggests that we demand that 
\begin{equation}
\begin{aligned}
\label{spaceadap1}
{\tightoverset{}{\eta}}_S^m \big|_K \leq {\tt stol^+}, \,\, K \in \mathcal{T}_m,
\end{aligned}
\end{equation}
is satisfied. On the other hand, the space derivative estimator ${\tightoverset{\bigcdot}{\eta}}_S^m$ is part of the term $\psi_m$ which is affected by ${r}_m$ and so we ask that
\begin{equation}
\begin{aligned}
\label{spaceadap2}
\widetilde{r}_{m-1}^{\,-1}{\tightoverset{\bigcdot}{\eta}}_S^m \big |_K \leq {\tt stol^+}, \,\, K \in \mathcal{T}_m,
\end{aligned}
\end{equation}
is satisfied. The remaining spatial term in $\psi_m$ is affected by ${r}_m$ as well but we must also divide it by the time step length $k_m$ in order to incorporate it into the space refinement indicator as the space refinement indicator shouldn't be (strongly) dependent upon the time step length. Therefore, we require that
\begin{equation}
\begin{aligned}
\label{spaceadap3}
\left[\,\widetilde{r}^{\,-1}_{m-1}k^{-1}_m \int_{I_m} \mathcal{L}(s,||U(s)||,||U(s)||+\xi_m) \dif s \right]\!{\tightoverset{}{\eta}}_S^m \big|_K \leq {\tt stol^+}, \,\, K \in \mathcal{T}_m,
\end{aligned}
\end{equation}
is satisfied. So upon defining
\begin{equation}
\begin{aligned}
\notag
\alpha_m := \max\!\left\{1, \, \widetilde{r}^{\,-1}_{m-1}k^{-1}_m \int_{I_m} \mathcal{L}(s,||U(s)||,||U(s)||+\xi_m) \dif s\right\}\!,
\end{aligned}
\end{equation}
we combine \eqref{spaceadap1}, \eqref{spaceadap2} and \eqref{spaceadap3} to define the space refinement indicator (for $m > 1$), viz.,
\begin{equation}
\begin{aligned}
\notag
{\tt ref}_S^m \big|_K := \max\!\Big\{\alpha_m{\tightoverset{}{\eta}}_S^m \big|_K, \,\widetilde{r}^{\,-1}_{m-1}{\tightoverset{\bigcdot}{\eta}}_S^m \big |_K\Big\}, \,\,K \in \mathcal{T}_m.
\end{aligned}
\end{equation}
As in the previous subsection, we must also modify the space refinement indicator on the first interval in order to take $\eta_I$ into account so we set
\begin{equation}
\begin{aligned}
\notag
{\tt ref}_S^1 \big|_K := \max\!\Big\{||e(0)||_{L^{\infty}(K)},\, \alpha_1{\tightoverset{}{\eta}}_S^0 \big|_K,\, \alpha_1{\tightoverset{}{\eta}}_S^1 \big|_K, \,{\tightoverset{\bigcdot}{\eta}}_S^1 \big |_K\Big\}, \,\,K \in \mathcal{T}_1.
\end{aligned}
\end{equation}

Finally, we recall from Corollary \ref{findeptheorem} that for $f$ independent of $u$ we have $\mathcal{L} = 0$ and $\delta_m = 1$. As $\delta_m$ always exists for this type of data, the additional stop critereon introduced in this subsection never comes into play. Moreover, $\mathcal{L} = 0 \!\implies\! \widetilde{r}_m = 1$ and so the reference indicators introduced here for the general case devolve into those of the previous subsection for $f$ independent of $u$. Therefore, we conclude that the two adaptive algorithms are the same for this type of data.

\begin{algorithm}
  \begin{algorithmic}[1]
     \State {\bf Input:} $a$, $f$, $u_0$, $\Omega$, $T$, $\mathcal{T}_0 \,(= \!\mathcal{T}_1)$, $k_1$, ${\tt stol^+}$, ${\tt stol^-}$, ${\tt ttol^+}$, ${\tt ttol^-}$.
     \State Compute $U^0$.
 \State Compute $U^1$ from $U^0$.
  
    \While {$\displaystyle{\tt ref}_T^1 > {\tt ttol^+} \,\text{\bf or}\, \max_{K \in \mathcal{T}_0} {\tt ref}_S^1 \big |_K > {\tt stol^+}$}
        \State {Modify $\mathcal{T}_0 \,(= \!\mathcal{T}_1)$ by refining all elements such that ${\tt ref}_S^1 \big |_K > {\tt stol^+}$ and coarsening all elements such that ${\tt ref}_S^1 \big |_K < {\tt stol^-}$.}
   \If   {$\displaystyle{\tt ref}_T^1 > {\tt ttol^+}$}
 \State $k_{1} \leftarrow k_{1}/2$.
    \EndIf
     \State Compute $U^0$.
 \State Compute $U^1$ from $U^0$.
    \EndWhile
    
   \State Attempt to compute $\delta_1$.
 \State  Set $m =  0$.

\While {$\delta_{m+1}$ exists \,{\bf and}\, $t_{m+1} < T$}
\State $m \leftarrow m+1$. 
 \State Multiply ${\tt stol^+}$, ${\tt stol^-}$, ${\tt ttol^+}$ and ${\tt ttol^-}$ by the factor $\delta_{m}$.
  \State Set $\mathcal{T}_{m+1} = \mathcal{T}_m$ and $k_{m+1} = \min \{k_m, \,T - t_m \}$.
\State Compute $U^{m+1}$ from $U^m$.
        \If {${\tt ref}_T^{m+1}> {\tt ttol^+}$}  
 \State $k_{m+1} \leftarrow k_{m+1}/2$.
\State Compute $U^{m+1}$ from $U^m$.
    \EndIf
        \If {${\tt ref}_T^{m+1} < {\tt ttol^-}$}
 \State $k_{m+1} \leftarrow \min\{2k_{m+1},\,T - t_m\}$.
\State Compute $U^{m+1}$ from $U^m$.
    \EndIf    
    \State Modify $\mathcal{T}_{m+1}$ by refining all elements such that ${\tt ref}_S^{m+1} \big |_K > {\tt stol^+}$ and coarsening all elements such that ${\tt ref}_S^{m+1} \big |_K < {\tt stol^-}$.
    \State Compute $U^{m+1}$ from $U^m$.
        \State Attempt to compute $\delta_{m+1}$.
\EndWhile
\State {\bf Output:} $m$, $t^m$, $U$.
  \end{algorithmic}
  \caption{Space-time adaptivity for the semilinear heat equation}
\end{algorithm}

\section{Numerical Experiments}

We consider an implementation of the adaptive algorithm of the previous section through an application of the {\tt deal.II} finite element library \cite{BHK07, BDHHKKMTW15}. In order to facilitate a comparison between the $L^\infty L^\infty$ estimator of Theorem \ref{maintheorem} and the $L^2H^1$ estimator of \cite{CGKM16}, we consider Example 1 and Example 3 of \cite{CGKM16} but under the  adaptive algorithm of the previous section and driven by the $L^\infty L^\infty$ \emph{a posteriori} error bound derived in this paper. If the \emph{a posteriori} error bound of Theorem \ref{maintheorem} is robust with respect to the distance from the blow-up time then for sufficiently small ${\tt stol^+}$ and ${\tt stol^-}$, we would expect to observe that
\begin{equation}
\begin{aligned}
\notag
|T_{\infty} - T({\tt ttol^+}, N)| \propto N^{-1},
\end{aligned}
\end{equation}
where $T_{\infty}$ is the blow-up time of \eqref{model_strong} and $T$ is the final time produced by the adaptive algorithm in $N$ total time steps under a given temporal refinement tolerance ${\tt ttol^+}$ as this is what was observed in the ODE experiments of \cite{CGKM16, KMW16, M15}. Additionally, we also apply the adaptive algorithm to a nonlinear fixed-time problem in order to demonstrate its generality and to show that the estimator of Theorem \ref{maintheorem} is of optimal order in space and time.

\subsection{Example 1}

Let $\Omega = (-8,8)^2$, $a = 1$, $f(u) = u^2$ and choose the initial condition to be the Gaussian blob given by $u_0(x,y) = 10\exp(-2x^2-2y^2)$. The blow-up set for this example consists of only a single point (the origin) making it spatially uncomplicated which allows us to focus solely on the temporal asymptotics. Now, since $f(u) = u^2$ then for any $v_1, v_2 \in \mathbb{R}$ we have 
\begin{equation}
\begin{split}
|f(v_1)-f(v_2)| = |v_1^2 - v_2^2 | \leq |v_1 - v_2|(|v_1|+|v_2|).
\end{split}
\end{equation}
Therefore, we have $\mathcal{L}(|v_1|,|v_2|) = |v_1| + |v_2|$ in \eqref{eq:Lip} and so $\delta_m$ (if it exists) is the smallest root of the function $\phi_m:[1,\infty) \to \mathbb{R}$ given by
\begin{equation}
\begin{split}
\label{deltaquad}
\phi_m(\delta) = 1+ \delta\!\left[\int_{I_m} L(s, \delta) \dif s -1 \right]\! = 1 + \delta\!\left[2C_{\infty}k_m\xi_m + 2\int_{I_m} ||U(s)|| \dif s -1  \right]+2k_m\psi_m\delta^2.
\end{split}
\end{equation}
In this case, we can calculate $\delta_m$ explicitly via the quadratic formula and so there is no need to use a root finding algorithm here.

Given that we wish to observe the temporal asymptotics and since for this example not much spatial resolution is required, we opt to use polynomials of degree nine. We begin by first setting a small spatial refinement tolerance $\tt stol^+$ so that the spatial error is negligible; we then gradually reduce the temporal refinement tolerance $\tt ttol^+$ in order to observe the rate of convergence to the blow-up time. We include the results in the left-hand side of Table \ref{blowupdata1} alongside the results that utilize the $L^2H^1$ estimator of \cite{CGKM16} on the right-hand side.

\begin{table}[ht]

\caption{\text{Example 1: $L^\infty L^\infty$ estimator of Theorem \ref{maintheorem} (left) and $L^2H^1$ estimator of \cite{CGKM16} (right).}} 
\centering 
\begin{tabular}{c c c c} 
\hline\hline 
${\tt ttol}^+$ & Time Steps & Final Time & $||U(T)||$ \\ 
\hline 
0.25 & 2 & 0.05375 & 11.042 \\ 
$0.25^2$ & 8 & 0.10750 & 13.644 \\
$0.25^3$ & 23 & 0.15453 & 19.936 \\
$0.25^4$ & 52 & 0.17469 & 27.721 \\
$0.25^5$ & 114 & 0.19148 & 42.960 \\
$0.25^6$ & 493 & 0.20702 & 103.901 \\
$0.25^7$ & 1004 & 0.21080 & 164.944 \\
$0.25^8$ & 2031 & 0.21332 & 273.236 \\
$0.25^9$ & 4093 & 0.21490 & 458.924 \\
$0.25^{10}$ & 8218 & 0.21571 & 745.826 \\
$0.25^{11}$ & 16479 & 0.21625 & 1276.960 \\

\hline 
\end{tabular}\qquad\qquad
\begin{tabular}{c c c} 
\hline\hline 
Time Steps & Final Time & $||U(T)||$ \\ 
\hline 
3 & 0.09375 & 12.244 \\ 
8 & 0.12500 & 14.742 \\
19 & 0.14844 & 18.556 \\
42 & 0.16406 & 23.468 \\
92 & 0.17969 & 32.108 \\
195 & 0.19043 & 44.217 \\
405 & 0.19775 & 60.493 \\
832 & 0.20313 & 83.315 \\
1698 & 0.20728 & 117.780 \\
3443 & 0.21014 & 165.833 \\
6956 & 0.21228 & 238.705 \\
14008 & 0.21375 & 343.078 \\
28151 & 0.21478 & 496.885 \\
56489 & 0.21549 & 722.884 \\
\hline 
\end{tabular}

\label{blowupdata1}
\end{table}

The results show that for this example the $L^\infty L^\infty$ estimator of Theorem \ref{maintheorem} outperforms the $L^2H^1$ estimator of \cite{CGKM16} in terms of rate of convergence to the blow-up time. We recall from \cite{CGKM16} that given two consecutive data points we can approximate the exact blow-up time $T_{\infty}$ as follows
\begin{equation}
\begin{split}
\label{blowuptimeapprox}
T_{\infty} \approx \frac{t_m||U^m||-t_{m-1}||U^{m-1}||}{||U^m||-||U^{m-1}||}.
\end{split}
\end{equation}
Applying this here, we obtain the approximation $T_{\infty} \approx 0.217015$. Using this approximation to $T_{\infty}$, we take the data from Table \ref{blowupdata1} and plot the distance from the blow-up time $|T-T_{\infty}|$ versus the total number of time steps $N$ in Figure \ref{example1results}. The plot shows that for this example we have
\begin{equation}
\begin{aligned}
\notag
|T_{\infty} - T({\tt ttol^+}, N)| \propto N^{-3 \slash 4}.
\end{aligned}
\end{equation}
This is slightly slower than expected given the ODE results of \cite{CGKM16, KMW16, M15}. Finally, for the final computational run we plot the magnitude of the numerical solution $||U(t)||$, the parabolic estimator $r_m \psi_m$ and the value $m\widetilde{r}_m{\tt ttol}^+$ versus the inverse of the distance to the blow-up time. From the results, given in Figure \ref{example1results}, we deduce the asymptotic estimate
\begin{equation}
\begin{aligned}
\notag
||U(t)|| \propto |t - T_{\infty}|^{-1},
\end{aligned}
\end{equation}
which is consistent with the asymptotics of the exact solution \cite{MZ98,MZ00} suggesting that our numerical solution is reasonable. Moreover, we expect (cf. Corollary 4.3 of \cite{KMW16}) that in the spatially asymptotic regime and under the adaptive strategy induced by the adaptive algorithm that the parabolic error $\rho$ satisfies
\begin{equation}
\begin{aligned}
\notag
 \max_{1 \leq k \leq m} ||\rho||_k \leq r_m \psi_m \leq m\widetilde{r}_m {\tt ttol}^+,
\end{aligned}
\end{equation}
which we confirm in Figure \ref{example1results}. Therefore, upon observing that the gradient of the estimator curve in Figure \ref{example1results} is two, we deduce that there exists a constant $C > 0$ that is independent of the distance to the blow-up time and the maximum time step length such that 
\begin{equation}
\begin{aligned}
\notag
 \max_{1 \leq k \leq m} ||\rho||_k \leq Cm|t_m - T_{\infty}|^{-2} \,{\tt ttol}^+.
\end{aligned}
\end{equation}

\begin{figure}
\centering
\includegraphics[scale=0.265]{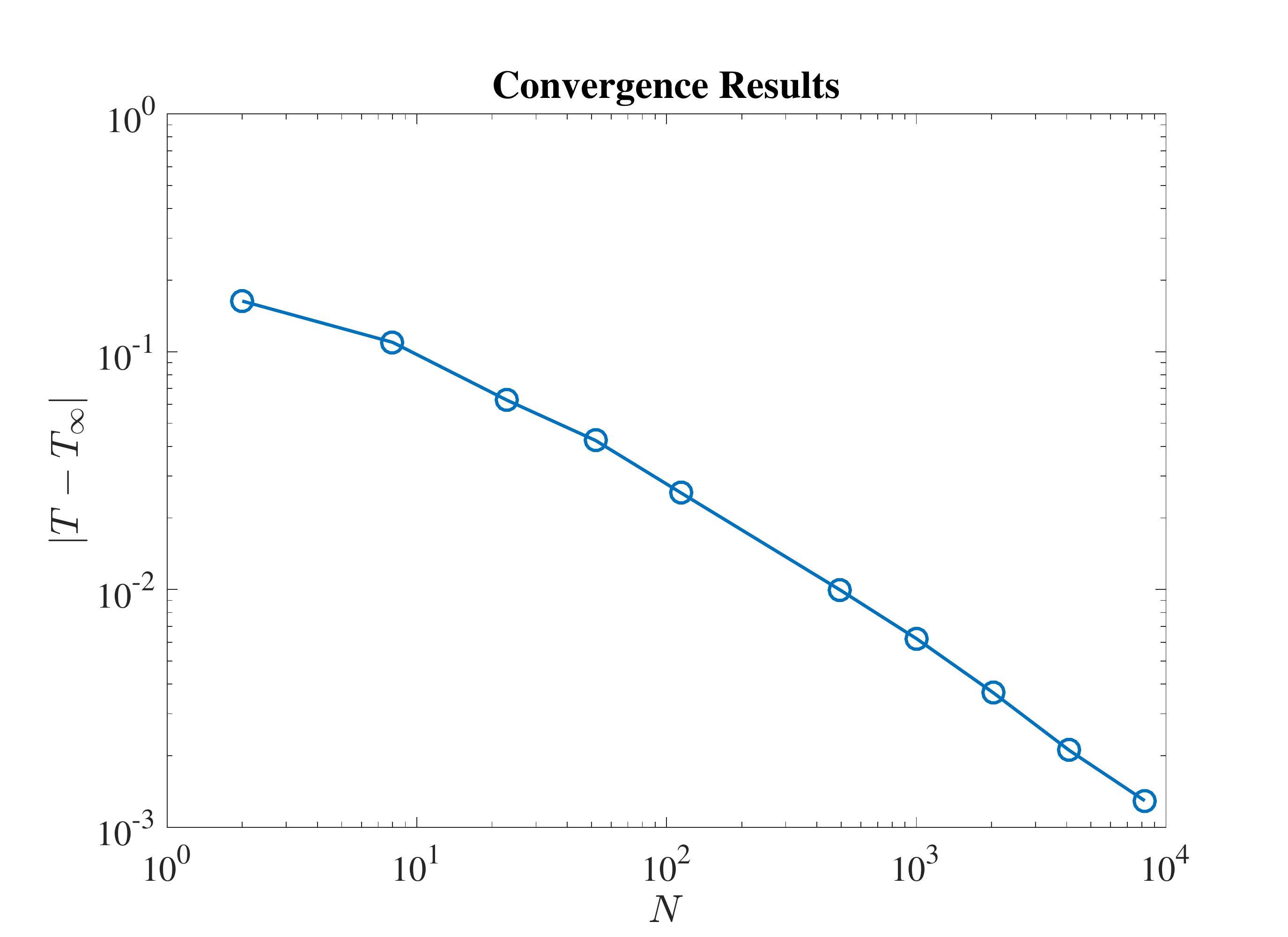} \includegraphics[scale=0.265]{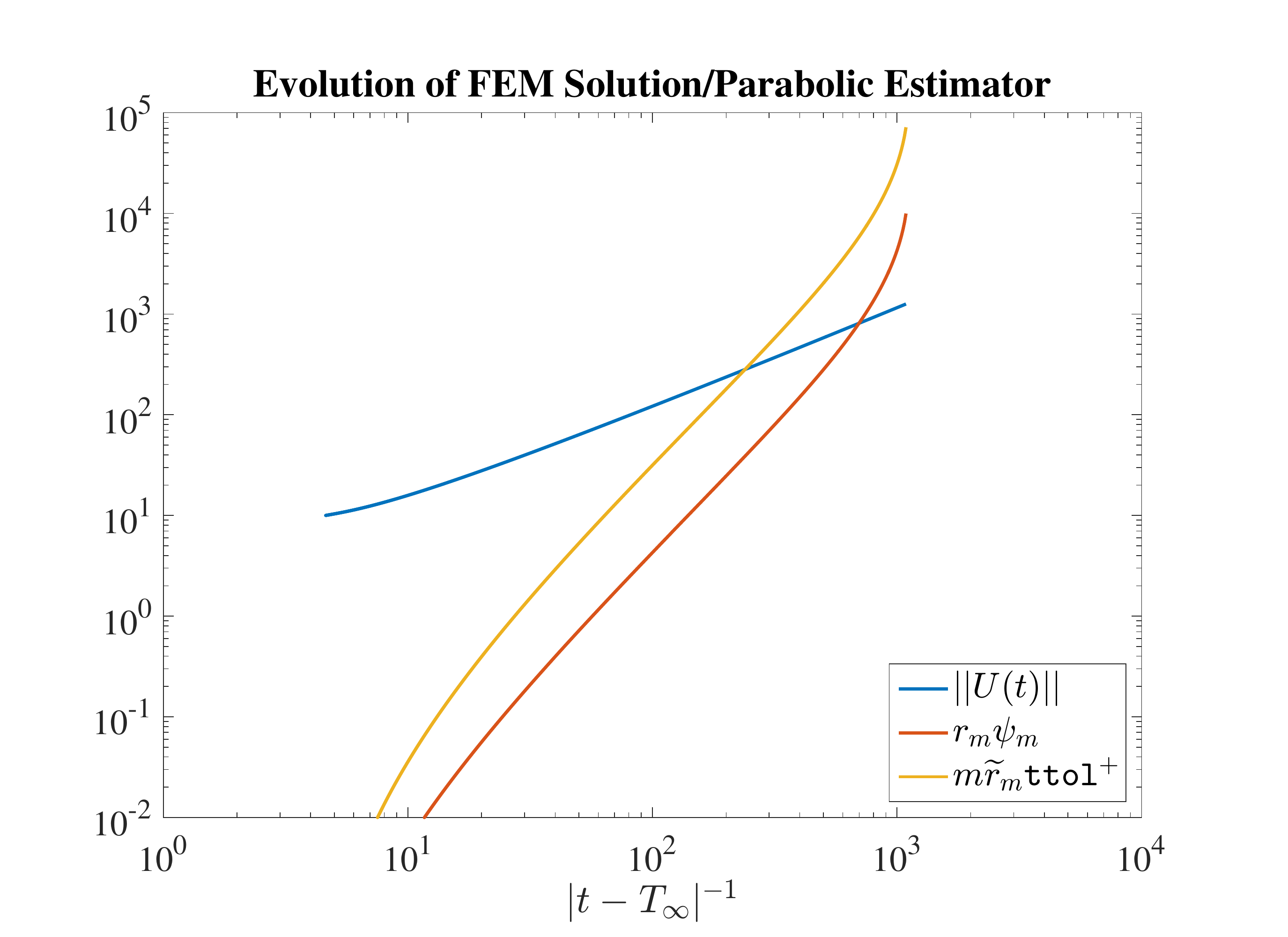}
\caption{\mbox{Example 1: convergence results (left) and evolution of the numerical solution (right).}}
\label{example1results}
\end{figure}

\subsection{Example 2}

Let $\Omega = (-8,8)^2$, $a = 1$, $f(u) = u^2$ and the ``volcano'' type initial condition be given by $u_0(x,y) = 10(x^2+y^2)\exp(-0.5x^2-0.5y^2)$. The blow-up set for this example is a circle centered on the origin making this example a good test of the spatial capabilities of the adaptive algorithm as many degrees of freedom are required in order to resolve the one-dimensional singularity close to the blow-up time. We remark that as the nonlinearity here is the same as in Example 1, $\delta_m$ is again the smallest root of \eqref{deltaquad}.

For this example, we opt to use polynomials of degree six as a compromise {\bf --} this is because we desire a large polynomial degree early on in order to take advantage of when the solution is smooth but close to the blow-up time we would like polynomials of low degree so that we don't get overwhelmed by degrees of freedom. We proceed as in the previous example by choosing a small spatial refinement tolerance $\tt stol^+$ so that the spatial error is negligible; we then gradually reduce the temporal refinement tolerance $\tt ttol^+$ in order to observe the rate of convergence to the blow-up time. The results, displayed on the left-hand side of Table \ref{blowupdata2} alongside the results that utilize the $L^2H^1$ estimator of \cite{CGKM16} on the right-hand side, show that the $L^\infty L^\infty$ estimator of Theorem \ref{maintheorem} again outperforms the $L^2H^1$ estimator of \cite{CGKM16} by an order of magnitude.

\begin{table}[ht]
\caption{\text{Example 2: $L^\infty L^\infty$ estimator of Theorem \ref{maintheorem} (left) and $L^2H^1$ estimator of \cite{CGKM16} (right).}} 
\centering 
\begin{tabular}{c c c c} 
\hline\hline 
${\tt ttol}^+$ & Time Steps & Final Time & $||U(T)||$ \\ 
\hline 
0.25 & 3 & 0.06210 & 10.347 \\ 
$0.25^2$ & 11 & 0.11385 & 17.687 \\
$0.25^3$ & 26 & 0.13455 & 27.548 \\
$0.25^4$ & 71 & 0.15525 & 65.557 \\
$0.25^5$ & 159 & 0.16043 & 119.261 \\
$0.25^6$ & 332 & 0.16301 & 208.434 \\
$0.25^7$ & 710 & 0.16495 & 445.018 \\
$0.25^8$ & 1463 & 0.16556 & 778.815 \\
$0.25^9$ & 2973 & 0.16598 & 1467.920 \\
$0.25^{10}$ & 6115 & 0.16627 & 3340.330 \\
$0.25^{11}$ & 12329 & 0.16635 & 6171.900 \\
$0.25^{12}$ & 24880 & 0.16640 & 11022.400 \\
\hline 
\end{tabular}\qquad\qquad
\begin{tabular}{c c c} 
\hline\hline 
Time Steps & Final Time & $||U(T)||$ \\ 
\hline 
3 & 0.06250 & 10.371 \\ 
10 & 0.09375 & 14.194 \\
36 & 0.11979 & 21.842 \\
86 & 0.13412 & 31.446 \\
190 & 0.14388 & 45.122 \\
404 & 0.15072 & 64.907 \\
880 & 0.15601 & 98.048 \\
1853 & 0.15942 & 146.162 \\
3831 & 0.16176 & 219.423 \\
7851 & 0.16336 & 332.849 \\
16137 & 0.16442 & 505.236 \\
32846 & 0.16512 & 769.652 \\
66442 & 0.16558 & 1175.210 \\
\hline 
\end{tabular}
\label{blowupdata2}
\end{table}

\begin{figure}[h]
\centering
\includegraphics[scale=0.265]{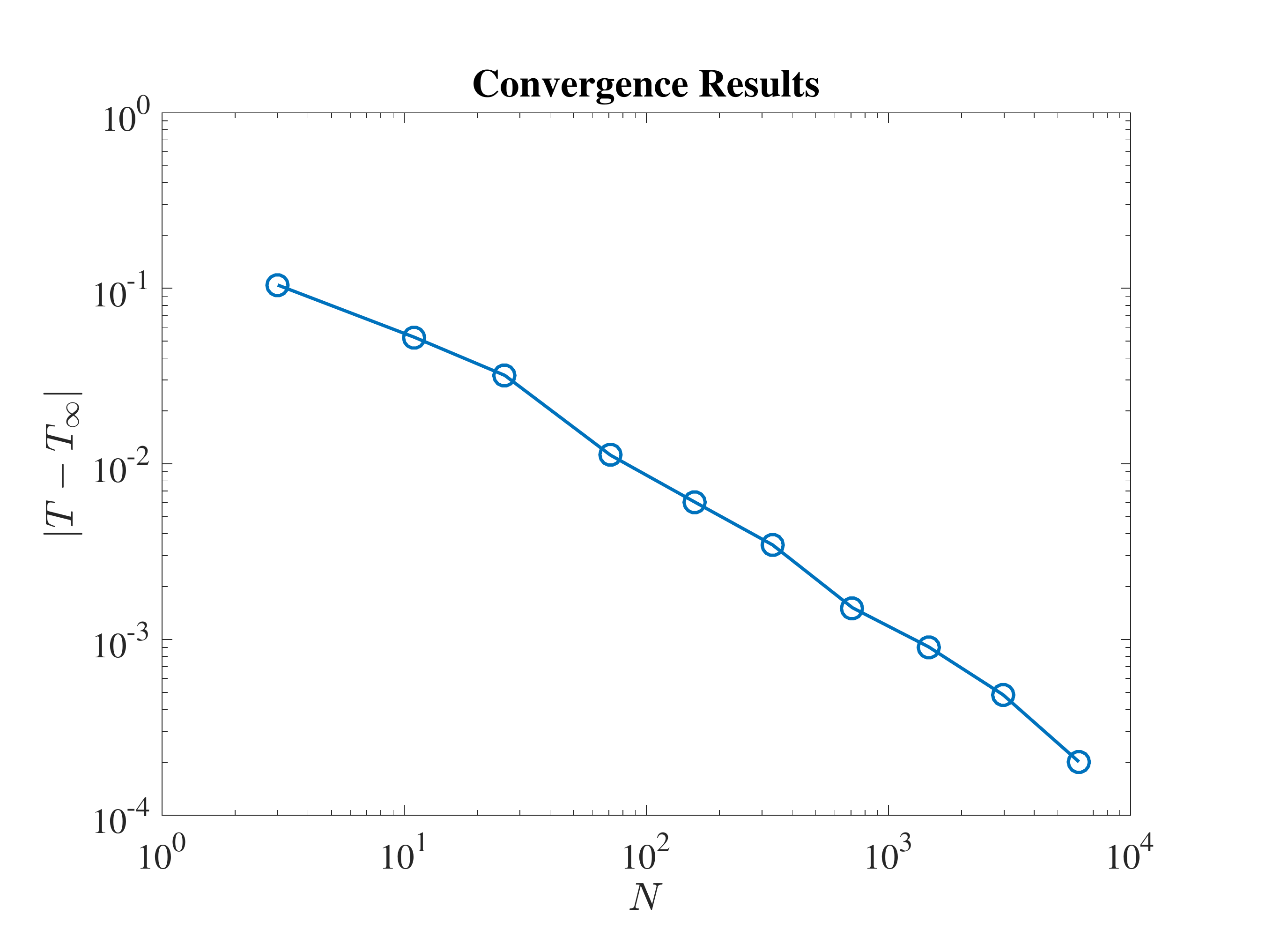} \includegraphics[scale=0.265]{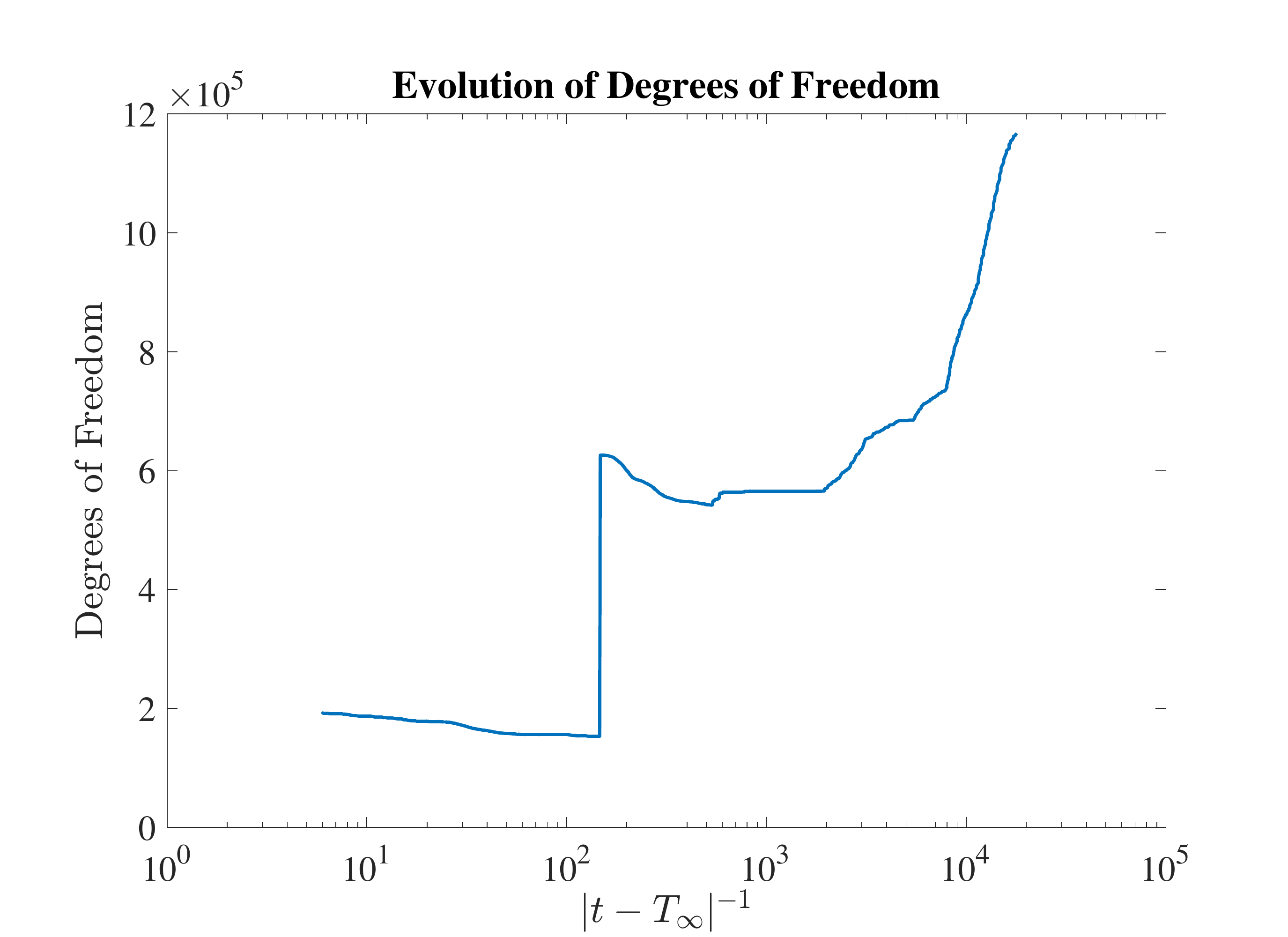}
\caption{\text{Example 2: convergence results (left) and evolution of degrees of freedom (right).}}
\label{example2results}
\end{figure}
\begin{figure}[h]
\centering
\includegraphics[scale=0.197]{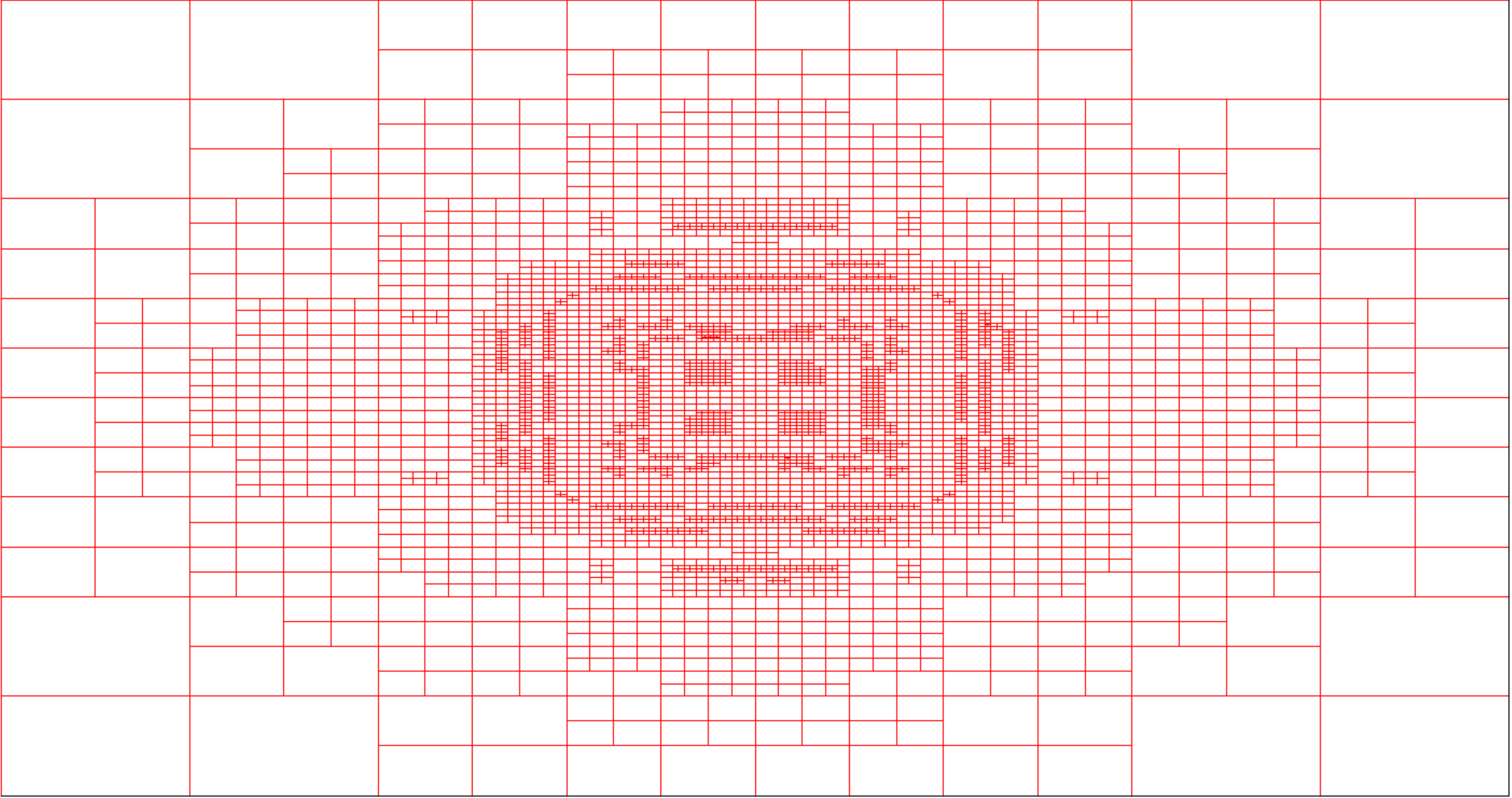} \hspace{1mm} \includegraphics[scale=0.197]{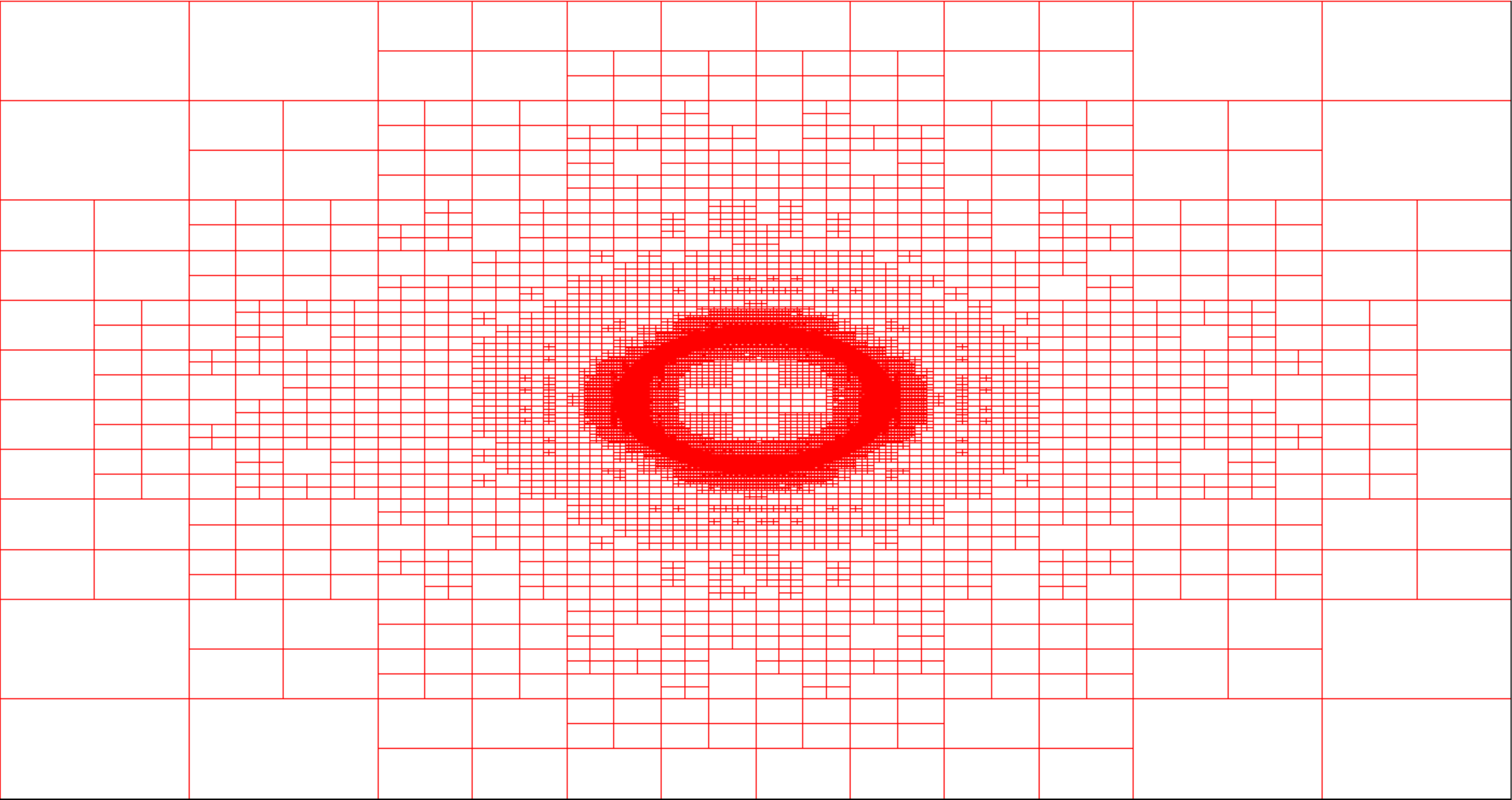}

\vspace{4mm}

\includegraphics[scale=0.184]{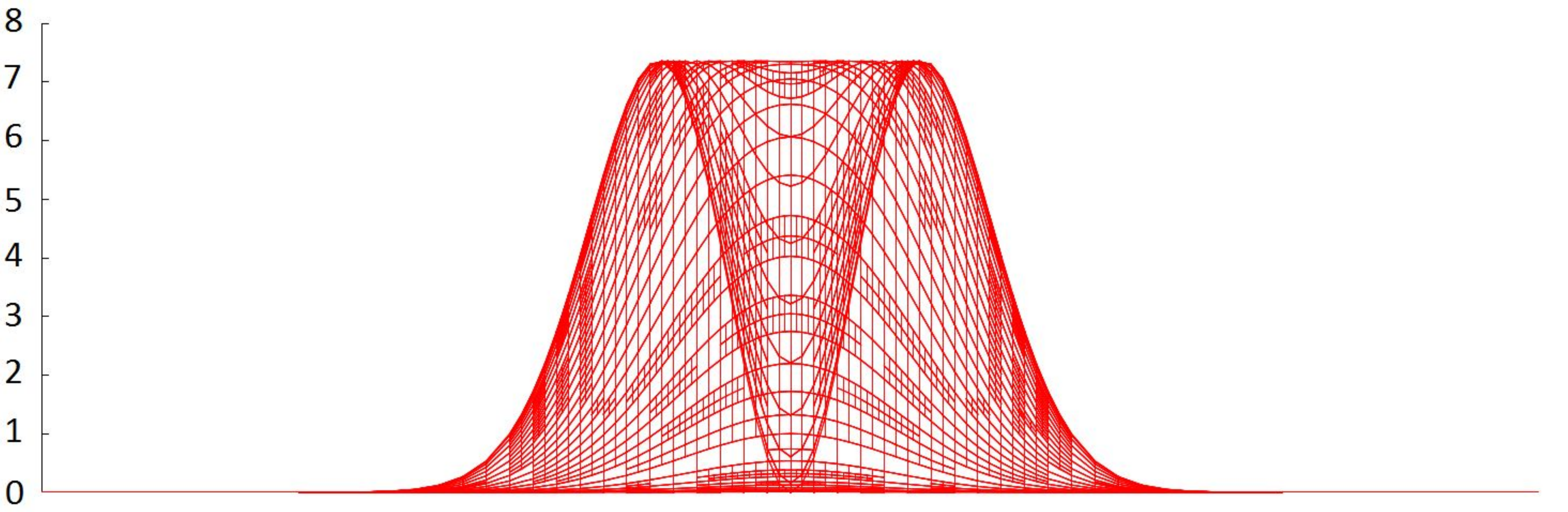} \hspace{-1.2mm}\includegraphics[scale=0.184]{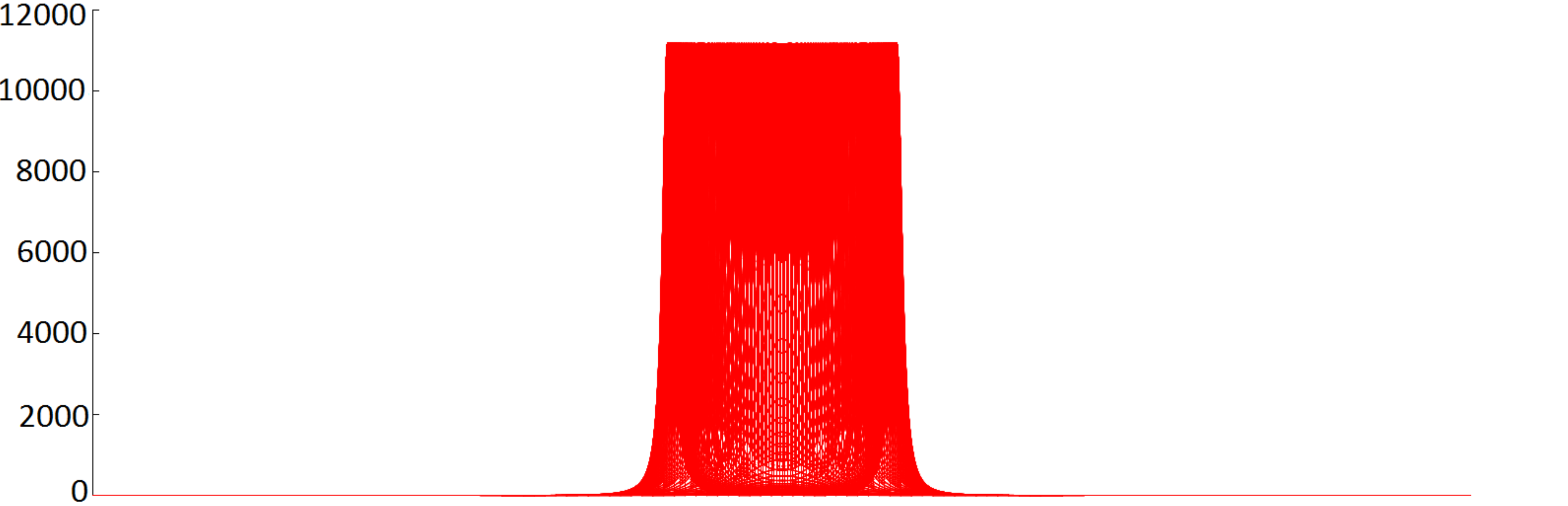}
\caption{Example 2: initial mesh (top left), final mesh (top right), initial solution profile (bottom left) and final solution profile (bottom right).}
\label{example2plots}
\end{figure}

As the nonlinearity is the same here as in Example 1, \eqref{blowuptimeapprox} is still valid and so we obtain the approximation $T_{\infty} \approx 0.166453$. Using this approximation to $T_{\infty}$, we take the data from Table \ref{blowupdata2} and plot the distance from the blow-up time $|T-T_{\infty}|$ versus the total number of time steps $N$ in Figure \ref{example2results}. The plot shows that for this example we have
\begin{equation}
\begin{aligned}
\notag
|T_{\infty} - T({\tt ttol^+}, N)| \propto N^{-1},
\end{aligned}
\end{equation}
which is what we expected to observe given the ODE results of \cite{CGKM16, KMW16, M15}. Next, we investigate the spatial properties of the adaptive algorithm during the final computational run. We begin by plotting the number of degrees of freedom versus the inverse of the distance to the blow-up time in Figure \ref{example2results}. The plot shows a general non-excessive increase in the number of degrees of freedom as we advance towards the blow-up time with some local decreases. To investigate this further, we display the meshes at times $t = 0$ and $t = T$ in Figure \ref{example2plots} which shows heavy refinement around the blow-up set and some derefinement in areas from which the finite element solution has retreated. For visualization purposes, we also display profile views of the finite element solution at times $t=0$ and $t=T$ in Figure \ref{example2plots}.

\subsection{Example 3} In this example, we consider a nonlinear parabolic problem from \cite{AW16, V05}. We set $\Omega = (0,1)^2$, $T = 0.75$, $a = 0.001$, $f(t,u) = \sin(t) - u^4$ and $u_0(x,y)=xy(x-1)(y-1)$. The solution is initially unremarkable but as time evolves it begins to exhibit boundary layers through the influence of the diffusion and the forcing term. For this nonlinearity, given any $t \in [0,T]$ and $v_1, v_2 \in \mathbb{R}$ we have
\begin{equation}
\begin{split}
|f(t,v_1)-f(t,v_2)| = |v_1^4 - v_2^4 | \leq |v_1 - v_2|(|v_1|^3+|v_1|^2|v_2| + |v_1||v_2|^2+|v_2|^3).
\end{split}
\end{equation}
Therefore, we have $\mathcal{L}(|v_1|,|v_2|) = |v_1|^3+|v_1|^2|v_2| + |v_1||v_2|^2+|v_2|^3$ in \eqref{eq:Lip} and so $\delta_m$ (if it exists) is the smallest root of the function $\phi_m:[1,\infty) \to \mathbb{R}$ given by
\begin{equation}
\begin{split}
\phi_m(\delta) = 1+ \delta\!\left[\int_{I_m} L(s, \delta) \dif s -1 \right]\! = 1 - \delta + 4\delta \int_{I_m} (\delta\psi_m + ||U(s)|| + C_{\infty}\xi_m)^3 \dif s,
\end{split}
\end{equation}
which we approximate via a Newton method.

Our primary goal in this numerical example is to verify that the estimator of Theorem \ref{maintheorem} is of optimal order in space and time when applied to a fixed-time problem. To that end, we begin by first checking the rate of convergence of the estimator in time. To do this, we choose a large polynomial degree and a small spatial refinement tolerance ${\tt stol}^+$ so that the spatial contribution to the estimator is negligible; we then gradually reduce the temporal refinement tolerance ${\tt ttol}^+$ in order to observe the rate of convergence of the estimator in time. Next, we plot the value of the estimator at final time versus the total number of time steps for the different computational runs in Figure \ref{example3results} {\bf --} the results show that the estimator is order one in time and, hence, optimal.

In order to analyze the rate of convergence of the estimator in space, we need to introduce a concept from \cite{CGM13} which is that of the \emph{weighted average degrees of freedom} given by
\begin{equation}
\begin{split}
\notag
\text{weighted average dofs} := \frac{1}{T} \sum_{m=1}^{M} k_m\lambda_m,
\end{split}
\end{equation}
where $\lambda_m$ is the number of degrees of freedom on the mesh $\mathcal{T}^{m-1} \vee \mathcal{T}^m$. In order to quantify the rate of convergence of the estimator in space, we first choose a small temporal refinement tolerance ${\tt ttol}^+$ so that the size of the temporal contribution to the estimator is negligible; we then gradually decrease the spatial refinement tolerance ${\tt stol}^+$ for polynomials of degree three and plot the value of the estimator at final time versus the weighted average degrees of freedom from the various computations in Figure \ref{example3results}. The results show that we obtain the expected, optimal rate of convergence in space. We also display meshes from one of the computational runs at times $t = 0$ and $t = T$ in Figure \ref{example3plots}. The initial mesh has some slight refinement around the boundary but is otherwise unremarkable whereas final mesh has significant refinement in the areas around the boundary suggesting that the spatial estimator has accurately captured the layers as they formed.

\begin{figure}[h]
\centering
\includegraphics[scale=0.265]{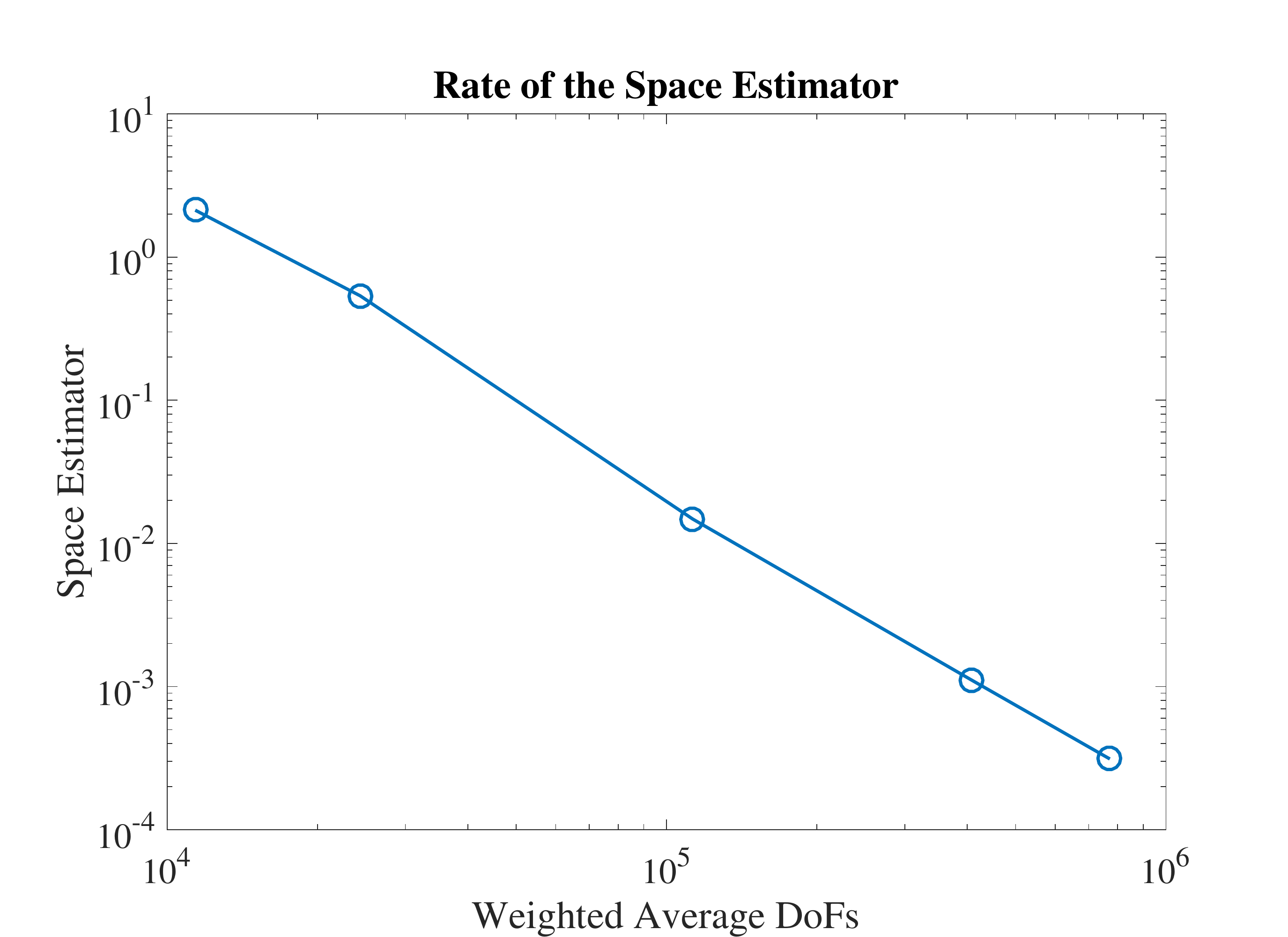} \includegraphics[scale=0.265]{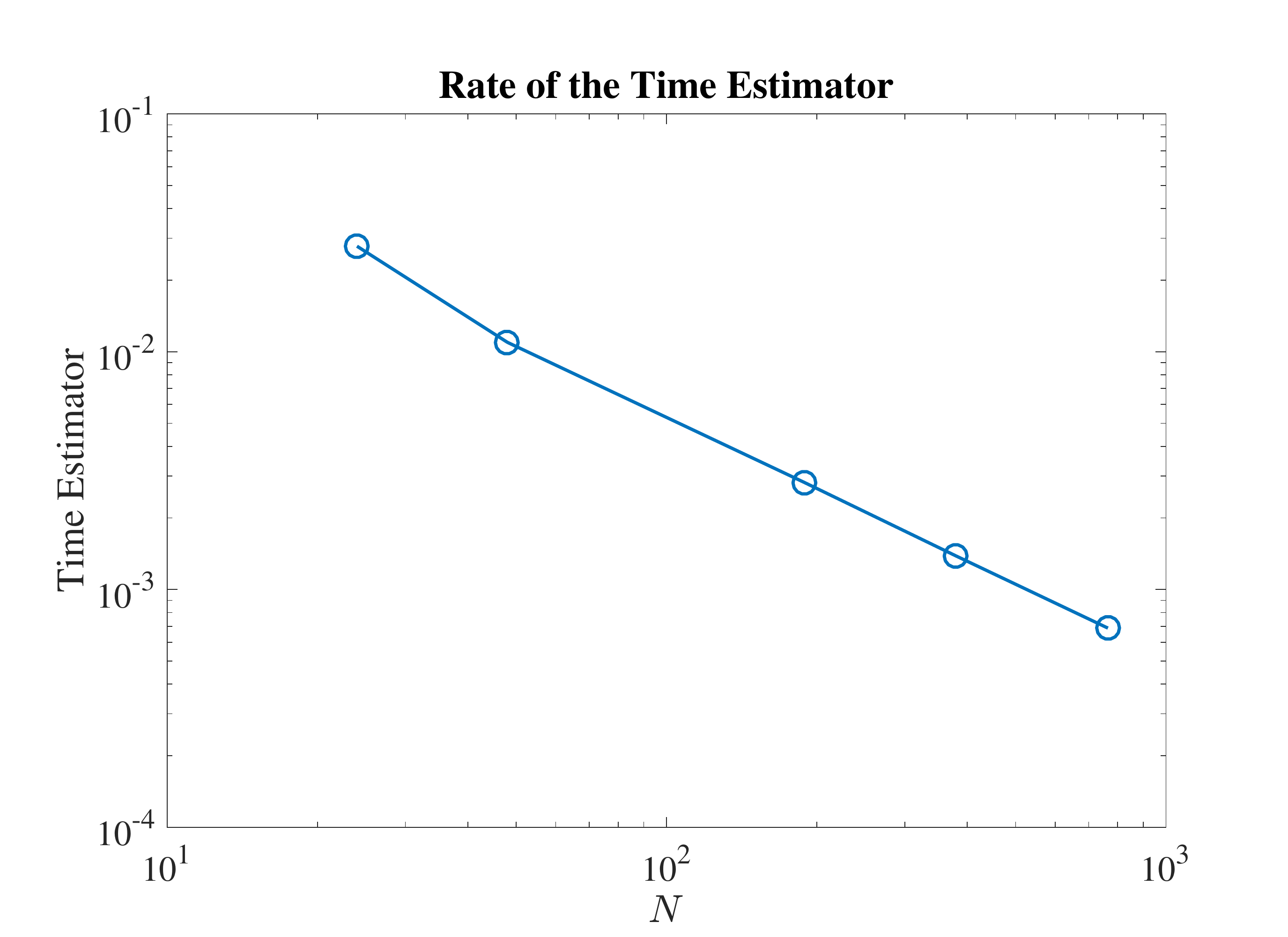}
\caption{\text{Example 3: spatial convergence rate for $p = 3$ (left) and temporal convergence rate (right).}}
\label{example3results}
\end{figure}

\begin{figure}[h]
\centering
\includegraphics[scale=0.196]{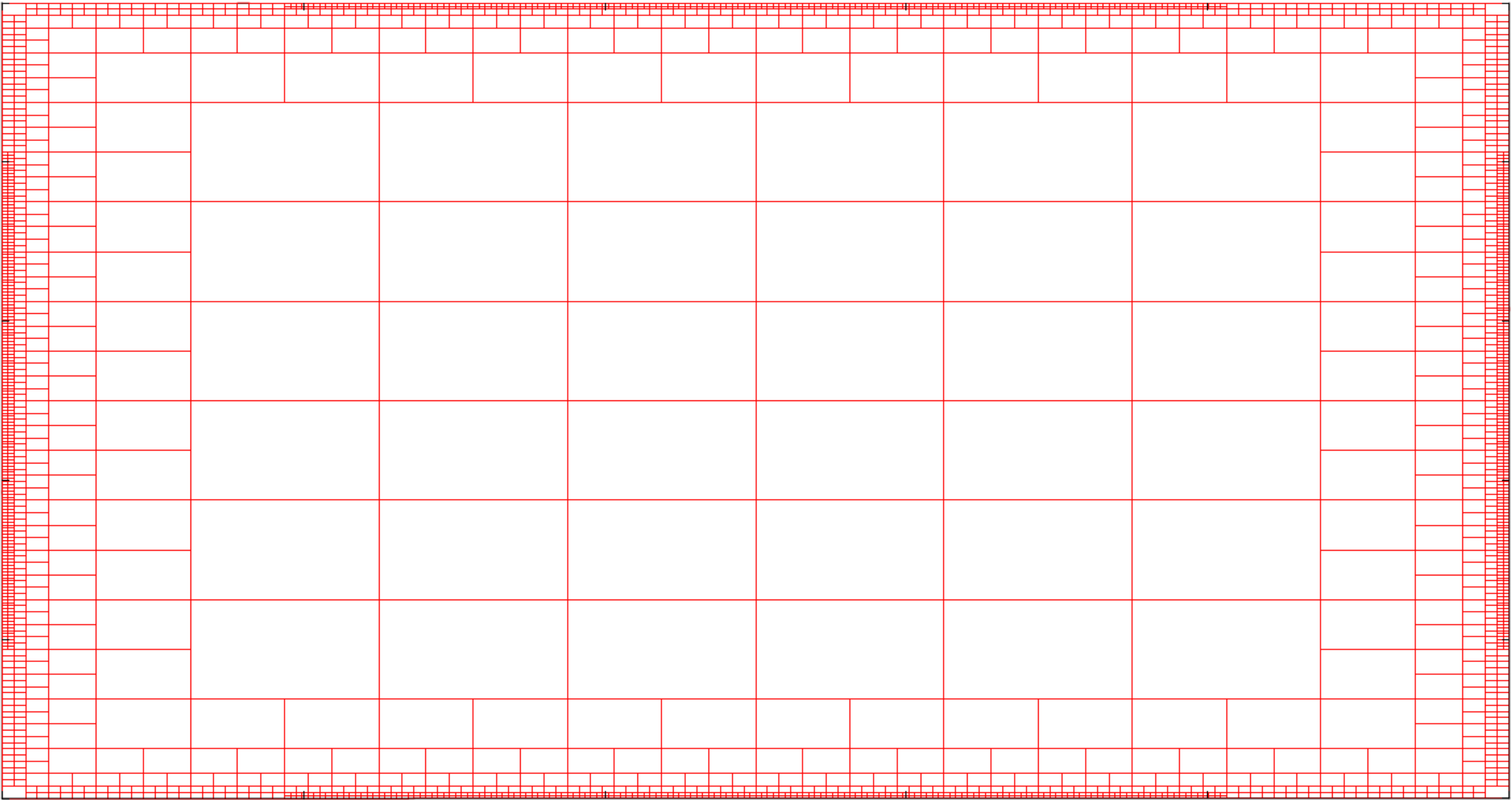} \hspace{1mm} \includegraphics[scale=0.196]{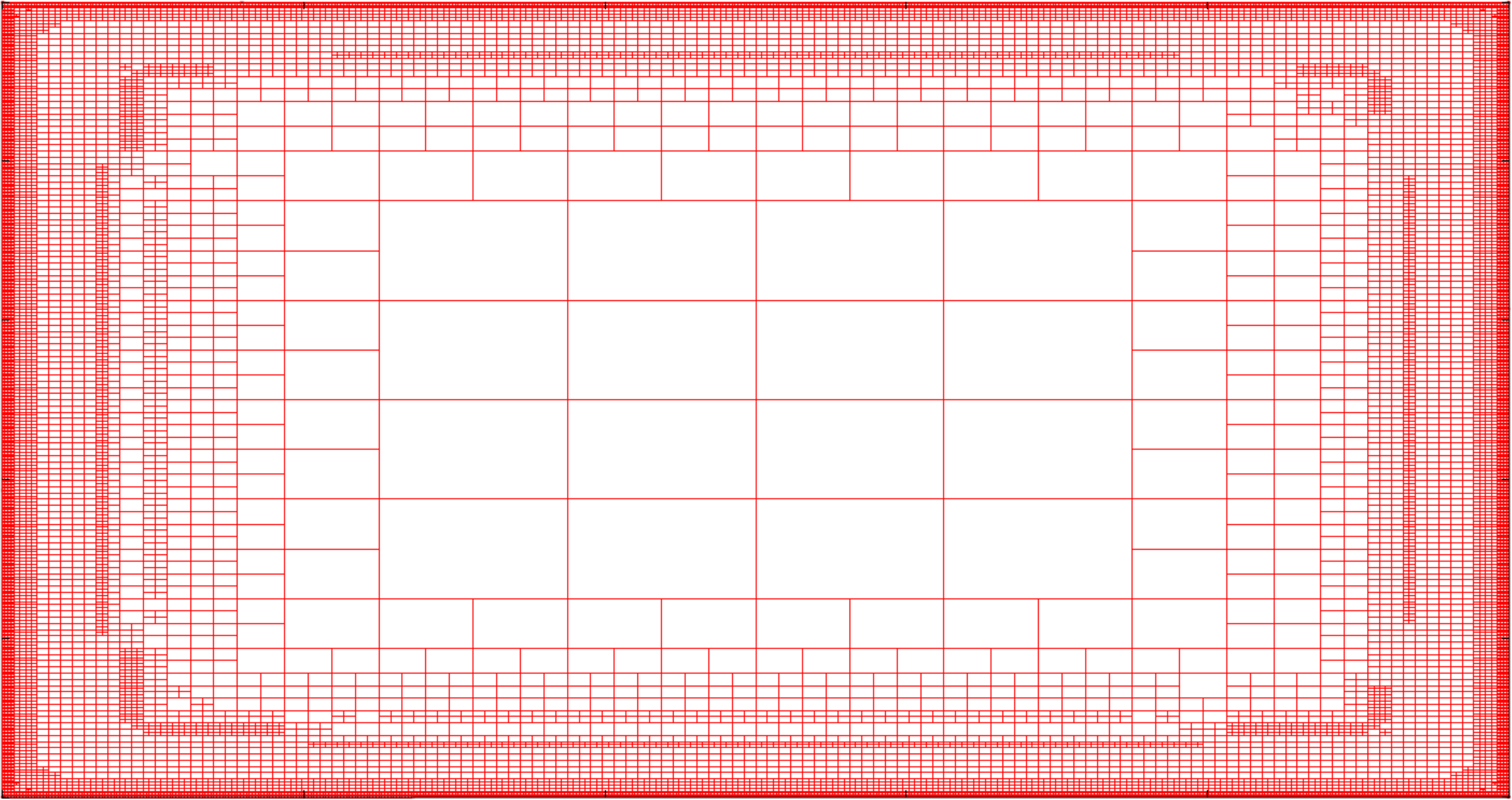}
\caption{Example 3: initial mesh (left) and final mesh (right).}
\label{example3plots}
\end{figure}

\section{Conclusions}

We derived a conditional $L^{\infty}L^{\infty}$ \emph{a posteriori} error bound (Theorem \ref{maintheorem}) for the IMEX discretization \eqref{IMEX} of the semilinear heat equation \eqref{model_strong} with general local Lipschitz nonlinearity \eqref{eq:Lip}. Our numerical experiments indicate that the proposed estimator outperforms the $L^2 H^1$ estimator of \cite{CGKM16} with respect to estimation of the blow-up time. Moreover, we were able to ascertain that the rate of convergence to the blow-up time is order one in the best-case scenario (Example 2) but we also determined that it can be slower (albeit still faster than in \cite{CGKM16}) in certain situations (Example 1). The slow convergence in Example 1 can be explained by the initial condition not having a ``compatible profile'' with the blow-up, that is, this choice of initial condition causes the solution to be significantly influenced by the laplacian early on (this can be seen in Figure \ref{blowupdata1} by noting that the numerical solution only achieves the estimate $||U(t)|| \propto |t - T_{\infty}|^{-1}$ asymptotically and not at all stages of the computation) in a way that is not suitably accounted for by the proposed estimator. Attempts were made to modify the estimator of Theorem \ref{maintheorem} to account for the influence of the laplacian in the spirit of Proposition 4.5 of \cite{DLM09} but this did not appear to have a significant impact on the performance of the estimator. We note, however, that a requirement on the initial condition to have a ``compatible profile'' with the nonlinearity in order to achieve optimal convergence is not unreasonable and has been a requirement in the \emph{a posteriori} error analysis of other nonlinear problems, for example, in \cite{KNS04}. Additionally, we verified in Example 3 that the estimator is of optimal order in space and time when applied to a fixed time problem. Indeed, we remark that condition \eqref{eq:Lip} is very general and that the estimator of Theorem \ref{maintheorem} can, in principle, be applied to \emph{any} nonlinear problem which satisfies it. In practise, however, the exponential term $\widetilde{r}_m$ restricts application of the estimator to nonlinear problems for which either

\smallskip

(1) the initial condition is small or,

\smallskip

(2) the nonlinearity is small or,

\smallskip

(3) the final time is small.

\smallskip

\noindent This also shows why the estimator works well for blow-up problems {\bf --} because a large nonlinearity corresponds to a small blow-up time ensuring that $\widetilde{r}_m$ never grows out of control. In the future, we would like to robustly incorporate the influence of the laplacian into the estimator of Theorem \ref{maintheorem}, prove convergence to the blow-up time under the proposed adaptive algorithm and explore the possibility of exponential convergence to the blow-up time in the spirit of \cite{KMW16}.

\section{Acknowledgements}

The research in this paper was conducted while the second author was affiliated with Universit\"at Bern and he is thankful to them for supporting the research. The authors would also like to thank Prof. Thomas P. Wihler of Universit\"at Bern for his comments and insights.

\bibliographystyle{amsplain}
\bibliography{paper1}
\end{document}